\newtheorem*{teoa}{Theorem A}
\newtheorem{defi}{Definition} 
\newtheorem{teo}{Theorem} 
\newtheorem{pro}{Proposition} 
\newtheorem{lema}{Lemma}
\newtheorem{rem}{Remark}
\newcommand{\N}{\mathbb{N}}
\newcommand{\R}{\mathbb{R}}
\begin{document}

\title[A generalization of Aubin's result]{A generalization of Aubin's result for a Yamabe-type problem on smooth metric measure spaces}
\author{Jhovanny Mu\~{n}oz Posso \textsuperscript{1} \textsuperscript{2}} 

\address{$^{1}$ Departamento de Matem\'aticas,
\small{Universidad del Valle, Calle $13 \# 100-00,$ Cali, Colombia}}
\email{jhovanny.munoz@correounivalle.edu.co}

\address{$^{2}$ Instituto de Matem\'aticas Puras e Aplicadas, Estrada Dona Castorina 110. Rio de Janeiro, Brasil.}
\email{jhovamu@impa.br}

\subjclass[2000]{53C23, 49Q20, 53A30, 53C21}

\keywords{Yamabe problem, smooth measure metric space, existence of minimizer}

\maketitle

\begin{abstract}
The Yamabe problem in compact closed Riemannian manifolds is concerned with finding a metric with constant scalar curvature in the conformal class of a given metric. This problem was solved by the combined work of Yamabe, Trudinger, Aubin, and Schoen. In particular, Aubin solved the case when the Riemannian manifold is compact, is nonlocally conformally flat and has a dimension equal to or greater than $6$. In $2015$, Case considered a Yamabe-type problem in the setting of smooth measure space in manifolds and for a parameter $m$, which generalizes the original Yamabe problem when $m=0$. Additionally, Case solved this problem when the parameter $m$ is a natural number. In the context of the Yamabe-type problem, we generalize Aubin's result for nonlocally conformally flat manifolds, with dimension equal and greater than 6 and  parameter $m$ close to nonnegative integers.

\end{abstract}

\section{Introduction}\label{intro}
Let $(M^n, g)$ be an $n$-dimensional compact Riemannian manifold and $R_g$ be the scalar curvature associated with the metric $g$. The Yamabe problem is concerned with finding a metric of constant scalar curvature in the conformal class of $g$. It is well known that the Yamabe problem was solved by the combined work of Yamabe \cite{Yamabe}, Trudinger \cite{Trudinger}, Aubin \cite{Aubin}, and Schoen \cite{Schoen}; for a presentation of this topic, see \cite{LeeParker}. In particular, we mention that Aubin in \cite{Aubin} solved the problem under the hypothesis that the Riemannian manifold is compact, nonlocally conformally flat and with dimension $n \geq 6$.

In \cite{CaseYamabe} and \cite{CaseGNS}, Case considered some geometric invariants that he called the \textit{weighted Yamabe constants}, which constitute a one-parameter family and interpolate between the Yamabe constant and Perelman's $\nu$-entropy when the parameter is zero and infinity, respectively. The Yamabe constant is the curved analogue of the sharp Sobolev inequality, and the $\nu$-entropy is the curved analogue of the sharp logarithmic Sobolev inequality. The weighted Yamabe constants are thus curved analogues of a family of sharp Gagliardo-Nirenberg-Sobolev inequalities. 






Before we explain Case's results, we introduce some terminology. Let us denote by $dV_g$ the volume form induced by the metric $g$. Fix a function $\phi \in C^{\infty}(M)$ and a dimensional parameter $m \in [0, \infty]$. A \textit{smooth metric measure space} is a four-tuple $(M^{n}, g, e^{-\phi} dV_g,m)$. Let us denote by $\Delta_g$ and $\nabla_g$ the Laplacian and the gradient associated to the metric $g$, respectively. The \textit{weighted scalar curvature} $R^{m}_{\phi}$ of a smooth metric measure space for $m=0$ is $R^{m}_{\phi} = R_g$ and for $m \neq 0$ is the function $R^{m}_{\phi} := R_g + 2\Delta_g \phi- \frac{m+1}{m}|\nabla_g \phi|^{2}$. The \textit{weighted Yamabe quotient} is the functional $\mathcal{Q}: C^{\infty} (M) \to \R$ defined by
\small
\begin{equation}\label{funcional Yamabe}
\begin{array}{ll}
\mathcal{Q}(w) = & \dfrac{ \int_{M}(|\nabla_g w|^{2} + \frac{m + n - 2}{4(m + n - 1)}R^{m}_{\phi } w^{2}) e^{-\phi}dV_g \left( \int_{M}| w| ^{\frac{2(m + n -1)}{m +n -2}} e^{-\frac{(m-1)\phi}{m}}dV_g \right)^{\frac{2m }{n}} }{ \left(\int_{ M } |w|^{\frac{2(m + n )}{m +n -2}} e^{-\phi} dV_g \right)^{
\frac{2m + n -2}{ n}} }.
\end{array}  
\end{equation}

\normalsize

The \textit{weighted Yamabe constant} is the number
\begin{equation}\label{Defi Yamabe constant}
\Lambda[M^n, g, e^{-\phi} dV_g, m] = \inf \{  \mathcal{Q}(w) : w \in  C^{\infty}(M), w \not \equiv 0 \}.
\end{equation}

For $m = \infty$, Case defined the weighted Yamabe quotient as the limit of \eqref{funcional Yamabe} when $m$ goes to infinity and the weighted Yamabe constant as \eqref{Defi Yamabe constant}. Note that when $m = 0$, the weighted Yamabe constant coincides with the Yamabe constant, and when $m = \infty$, if the weighted Yamabe constant is positive, this is equivalent to Perelman's entropy (see \cite{Perelman}). By the Gagliardo-Nirenberg inequality, it is possible to consider the functional defined in \eqref{funcional Yamabe} and the constant defined in \eqref{Defi Yamabe constant} for $(\R^n, dx^2, dV, m)$, where $dx^2$ is the usual metric in $\R^2$ and $dV$ is the volume form induced by this metric. We denote by $\Lambda_{m,n}$ the weighted Yamabe constant $\Lambda(\R^n, dx^2, dV, m)$ (see Theorem \ref{teoGN} below).

Two smooth measure metric spaces $(M^n, g, e^{- \phi} dV_g, m )$ and
$(M^n, \hat{g}, e^{- \hat{\phi}} dV_{\hat{g}}, m )$ are pointwise conformally equivalent if there is a function $\sigma \in C^{\infty}(M)$ such that $\hat{g} = e^{\frac{2 \sigma}{m + n - 2}}g$ and $\hat{\phi} = \frac{-m\sigma}{m+n-2} + \phi$. We say that $(M^n, g, e^{- \phi} dV_g,  m )$ and $(\hat{M}^n, \hat{g}, e^{- \hat{\phi}} dV_{\hat{g}},  m )$ are conformally equivalent if there is a diffeomorphism $F : \hat{M} \to M$ such that $(\hat{M}^n, \hat{g}, e^{- \hat{\phi}} dV_{\hat{g}},  m )$
is pointwise conformally equivalent to $(F^{-1}(M), F^{*}g, F^{*}(e^{- \phi} dV_g),  m )$.

The \textit{weighted Yamabe problem} is to find a function that minimizes the weighted Yamabe quotient. In \cite{CaseYamabe}, Case proved an Aubin-type criterion for the existence of a minimizer of the weighted Yamabe quotient. The exact statement is:

\begin{teo}[\cite{CaseYamabe}]\label{Aubin Case}
Let $(M^n, g, e^{-\phi} dV_g, m)$ be a compact smooth metric measure space such that $m \geq 0$. Then,
\begin{equation}\label{desigualdad Aubin}
\Lambda[M^n, g, e^{-\phi} dV_g, m] \leq \Lambda_{m,n}.
\end{equation}

Furthermore, if the inequality \eqref{desigualdad Aubin} is strict, then there exists a smooth positive function $w$ such that 
\begin{equation*}\label{equality}
\mathcal{Q}(w) = \Lambda[M^n, g, e^{-\phi} dV_g, m].
\end{equation*}

\end{teo}

Additionally, in \cite{CaseYamabe}, Case proved the strict inequality in \eqref{desigualdad Aubin} when $m \in  \N \cup \{0\} $, together with a characterization for the equality in \eqref{desigualdad Aubin}.

\begin{teo}[\cite{CaseYamabe}]\label{Case teo}
Let $(M^n, g, e^{-\phi} dV_g, m)$ be a compact smooth metric measure space such that $m \in \N \cup \{0\}$. The following equality holds
\begin{equation}\label{igualdad Aubin m entero}
\Lambda[M^n, g, e^{-\phi} dV_g, m] = \Lambda_{m,n}
\end{equation} if and only if $(M^n, g, e^{-\phi} dV_g, m)$ is conformally equivalent to $(S^n, g_0, dV_{g_0}, m)$, where $(S^n, g_0)$ is the n-dimensional sphere with a metric of constant sectional curvature  and $m \in \{0, 1\}$. Therefore, there exists a smooth positive function $w$ such that 
\[
\mathcal{Q}(w) = \Lambda[M^n, g, e^{-\phi} dV_g, m].
\]
\end{teo}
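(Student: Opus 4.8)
The plan is to combine the conformal invariance of the weighted Yamabe constant with the sharp inequality \eqref{GNS} and its rigidity. I first note that on $(\R^n, dx^2, dV, m)$ the weighted scalar curvature vanishes, so the quotient \eqref{funcional Yamabe} is exactly the Gagliardo--Nirenberg--Sobolev functional; hence $\Lambda[\R^n, dx^2, dV, m] = \Lambda_{m,n}$ and, by the theorem of Del~Pino \cite{DelPino}, the value is attained precisely by the bubbles $w_{\epsilon,x_0}$ of \eqref{funcion Del Pino}. I then use that $\Lambda$ is an invariant of the weighted conformal class: under $\hat g = e^{2\sigma}g$ and $\hat\phi = \phi - m\sigma$ one has $\mathcal{Q}_{\hat g,\hat\phi}(\hat w) = \mathcal{Q}_{g,\phi}\big(e^{\frac{m+n-2}{2}\sigma}\hat w\big)$, so the constant is unchanged under such changes. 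This reduces the forward implication to a single model computation on the round sphere.

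For the implication $(\Leftarrow)$, assume $m\in\{0,1\}$ and that the space is conformally $(S^n, g_0, dV_{g_0}, m)$; by invariance it suffices to prove $\Lambda[S^n, g_0, dV_{g_0}, m] = \Lambda_{m,n}$. The case $m=0$ is the classical fact that the round sphere realizes the Yamabe constant. For $m=1$ the decisive simplification is that the weight $e^{-(m-1)\phi/m}$ in the middle factor of \eqref{funcional Yamabe} is identically $1$ and the outer exponent $(2m+n-2)/n$ equals $1$; evaluating at the constant function and applying the Legendre duplication formula gives $\mathcal{Q}(1) = \frac{(n-1)^2}{4}\,\mathrm{Vol}(S^n)^{2/n} = \Lambda_{1,n}$. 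Since Theorem \ref{Aubin Case} already gives $\Lambda[S^n, g_0, dV_{g_0}, 1]\le\Lambda_{1,n}$, it remains to see that the constant is a genuine minimizer; this is the sharp form of \eqref{GNS} on the sphere, which I would derive from the flat inequality through the conformal equivalence of $(S^n, g_0, dV_{g_0}, 1)$ with a weighted flat space under which the bubble $w_{1,0}$ corresponds to the constant. Conceptually, $m\in\{0,1\}$ are exactly the values for which the associated effective structure of dimension $n+m$ is conformally flat, which is what lets the extremal of \eqref{GNS} be realized on a compact model.

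For the implication $(\Rightarrow)$, suppose $\Lambda[M^n,g,e^{-\phi}dV_g,m] = \Lambda_{m,n}$. I would run the test-function analysis behind Theorem \ref{Aubin Case}: inserting the bubbles $w_{\epsilon,p}$ of \eqref{funcion Del Pino} in weighted conformal normal coordinates at a point $p$ and expanding the quotient produces $\mathcal{Q}(w_{\epsilon,p}) = \Lambda_{m,n} - c\,\mathcal{W}(p)\,\epsilon^{k} + o(\epsilon^{k})$, where $\mathcal{W}\ge 0$ is a local weighted-curvature invariant, the analogue in effective dimension $n+m$ of the squared Weyl tensor. Equality precludes any strictly negative correction and forces $\mathcal{W}\equiv 0$. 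For $m\ge 2$ the relevant compact borderline model $S^n\times S^m$ fails to be conformally flat, so $\mathcal{W}$ is somewhere positive and the inequality is strict; hence equality forces $m\in\{0,1\}$ and, in these cases, weighted local conformal flatness. A rigidity step of Obata type, together with the positive-mass argument of \cite{Schoen} in the conformally flat borderline, then upgrades this to the conclusion that $(M^n,g,e^{-\phi}dV_g,m)$ is conformally $(S^n, g_0, dV_{g_0}, m)$; the existence of the positive smooth minimizer follows, since on the round sphere the constant (equivalently, the transported bubble) attains $\Lambda_{m,n}$.

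The main obstacle is this reverse implication. Two points are delicate: computing the second-order coefficient $\mathcal{W}$ in the bubble expansion precisely enough to confirm that its pointwise vanishing selects exactly $m\in\{0,1\}$, and handling the conformally flat borderline, where the expansion is inconclusive and one must invoke positive-mass/Obata rigidity instead. This is compounded by the fact that in the equality case Theorem \ref{Aubin Case} does not itself furnish a minimizer, so the argument must proceed through the concentration behaviour of minimizing sequences rather than through an assumed extremal.
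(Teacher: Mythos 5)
First, a point of comparison: the paper does not prove this statement at all --- it is quoted from \cite{CaseYamabe}, and the only trace of Case's actual mechanism visible here is Theorem \ref{inductive constants}, the engine of an induction on the integer $m$ (base case: the classical Yamabe equality case, which rests on Schoen's positive mass theorem \cite{Schoen}; inductive step: the minimizer at level $m$ controls level $m+1$, together with an exact analysis of the round sphere). So your proposal must stand on its own, and while your reductions are correct as far as they go --- the identification $\Lambda[\R^n,dx^2,dV,m]=\Lambda_{m,n}$ with bubbles as extremals, the conformal invariance of $\Lambda$, and the duplication-formula computation $\mathcal{Q}(1)=\tfrac{(n-1)^2}{4}\operatorname{Vol}(S^n)^{2/n}=\Lambda_{1,n}$ on $(S^n,g_0,dV_{g_0},1)$, which really is the right verification that constants \emph{achieve} the Euclidean value at $m=1$ --- the two steps you flag as ``delicate'' are not delicate but broken. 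The first gap is the lower bound $\Lambda[S^n,g_0,dV_{g_0},1]\ge\Lambda_{1,n}$. In the weighted conformal relation \eqref{def conformal} the density transforms along with the metric ($\hat v=e^{\sigma/(m+n-2)}v$, Proposition \ref{conformal laplacian}); consequently the stereographic image of $(S^n,g_0,dV_{g_0},1)$ is $\bigl(\R^n,dx^2,\tfrac{1+|x|^2}{2}\,dV,1\bigr)$, \emph{not} the GNS model $(\R^n,dx^2,dV,1)$. That space has nonconstant density, its weighted scalar curvature is $R^1_{\phi}=-2\Delta v/v=-4n/(1+|x|^2)\neq 0$, and every integral in \eqref{funcional Yamabe} carries powers of $v$, so \eqref{GNS} gives no lower bound for the transported functional. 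Decisively, nothing in your transport is special to $m=1$: under stereographic change the sphere constant corresponds to the bubble $w_{1,0}$ of \eqref{funcion Del Pino} for \emph{every} $m$, so if the argument were valid it would produce a minimizer on $(S^n,g_0,dV_{g_0},\tfrac{1}{2})$, contradicting Theorem \ref{Case teo esfera no existe minimo}, and would force equality for all integers $m\ge 2$, contradicting the very theorem you are proving.

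The converse direction has the same structural defect. An expansion $\mathcal{Q}(w_{\epsilon,p})=\Lambda_{m,n}-c\,\mathcal{W}(p)\,\epsilon^{k}+o(\epsilon^{k})$ with a \emph{nonnegative} local invariant $\mathcal{W}$ is not available: the second-order coefficient in exactly this kind of expansion is this paper's $A_4$ in \eqref{A4 nuevo}, which contains the term $+\tfrac{m(m-1)}{2}|\text{Hess}\,\hat v|^2$ of the wrong sign precisely when $m>1$; that is why the present paper can conclude only for $0\le m\le 1$ and must then fall back on the induction of Theorem \ref{inductive constants}. Even granting a signed expansion, pointwise vanishing of a local invariant can neither determine the parameter $m$ nor force global conformal equivalence to the sphere: the flat torus with $v\equiv 1$ has every local invariant equal to zero for every $m$, yet equality must fail for it; excluding it requires a global input, which for $m=0$ is the positive mass theorem and for which no weighted analogue ($m\ge 1$) exists for you to invoke --- your appeal to ``positive-mass/Obata rigidity'' in the weighted borderline is a citation to a theorem that does not exist. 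Your heuristic that $m\in\{0,1\}$ are exactly the parameters for which the effective $(n+m)$-dimensional model ($S^n\times\R$, $S^n\times S^1$ versus $S^n\times S^m$, $m\ge2$) is locally conformally flat correctly identifies the underlying mechanism, but it only bears on the sphere itself, not on an arbitrary $M$ with $\mathcal{W}\equiv 0$. Case's proof avoids both obstacles by never leaving the inductive framework: strictness or equality-rigidity at level $m$ propagates to level $m+1$ through Theorem \ref{inductive constants}, so that the only local-versus-global rigidity ever used is the classical $m=0$ one, and the sphere cases are handled by explicit computation rather than by transporting \eqref{GNS}.
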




In this paper, we prove for nonlocally conformally flat manifolds with dimension $n \geq 6$ and $m \in  \bigcup \limits_{i \in \mathbb{N} \cup \{ 0 \}} [i,i + \delta)$ for some $\delta > 0$ that inequality \eqref{desigualdad Aubin} is strict. By Theorem \ref{Aubin Case}, the existence of a minimizer of the weighted Yamabe problem follows. This result is a ge\-ne\-ra\-li\-za\-tion of the Aubin existence theorem and a generalization of the Case existence result for $m$ close to the integers.

\begin{teoa}\label{minimizante Jhova Aubin}
Let $(M^n,  g)$ be a compact Riemannian manifold, $\phi \in C^{\infty}(M)$, and $n \geq 6$. If $(M,g)$ is nonlocally conformally flat, there exist $ 0 < \delta \leq 1$ such that for 
\[m \in  \bigcup \limits_{i \in \mathbb{N} \cup \{ 0 \}} [i,i + \delta)\] we obtain
\begin{equation}\label{desigualdad Aubin locally flat}
\Lambda[M^n, g, e^{-\phi} dV_g, m] < \Lambda_{m,n}. 
\end{equation}
 
Therefore, there exists a smooth positive minimizer of the weighted  Yamabe quotient.
\end{teoa}

We use arguments similar to those Aubin used in \cite{Aubin} to prove Theorem A. These arguments involve test functions in the Yamabe quotient with support in a neighborhood of a point where the Weyl tensor is nonzero. However, when we restrict to the case $m=0$, we use different test functions from the ones used in \cite{Aubin}. For this reason, our proof is different than Aubin's Theorem for $n\geq 7$.

This paper is organized as follows. In sections \ref{smooth} and \ref{Yamabe}, we present some basic concepts about smooth metric measure spaces and the Yamabe-type problem on these spaces, respectively. In section \ref{Aubin}, we prove Theorem A. We complement this with an appendix devoted to some calculus lemmas that we use in the proof of Theorem A.

\section{Smooth metric measure space and the conformal Laplacian}\label{smooth}

Our approach in this section is based on \cite{CaseYamabe} and \cite{CaseGNS}. The first step is to introduce the definition of a smooth metric measure space 

\begin{defi} Let $(M^n, g)$ be a Riemannian manifold, and let us denote by $dV_g$ the volume form induced by $g$ in $M$. Fix a function $\phi \in C^{\infty}(M)$ and a dimensional parameter $m \in [0, \infty]$. When $m = 0$, we require that $\phi = 0$. A smooth metric measure space is the four-tuple \hbox{$(M^n, g, e^{- \phi} dV_g,  m )$}.
\end{defi}

As in \cite{CaseYamabe}, we sometimes denote by the three-tuple $(M^n, g, v^{m} dV_g)$ a smooth metric measure space where $v$ and $\phi$ are related by $v^{m} = e^{- \phi}$. We denote by $R_g$, $Ric_g$, $T_g$ and $W_g$ the scalar curvature, the Ricci tensor, traceless Ricci tensor and the Weyl tensor of $(M, g)$, respectively. In some cases, we omit the reference to the metric $g$ and write $R$, $Ric$, $T$, and $W$. In the following definitions, we consider the case of $m = \infty$ as the limit case of the parameter $m$.

\begin{defi}
Given a smooth metric measure space $(M^n, g, e^{- \phi} dV_g,  m )$, the   weighted scalar curvature $R^{m}_{\phi}$ and the Bakry-\'Emery Ricci curvature $Ric^{m}_{\phi}$ are the tensors
\begin{equation}\label{Bakrycurvature}
R^{m}_{\phi} := R_g + 2 \Delta_g \phi - \frac{m + 1}{m} |\nabla_g \phi|_g^2
\end{equation} and
\begin{equation}
Ric^{m}_{\phi} := Ric +  Hess_g \phi - \frac{1}{m} d \phi \otimes d\phi,
\end{equation} where $\Delta_g$ is the usual Laplacian, $\nabla_g $ is the gradient, $|\cdot|_g$ is the tensor norm, and $Hess_g$ is the Hessian, all of which are calculated in the metric $g$.
\end{defi}

Since $\phi = -m \ln v$, equality \eqref{Bakrycurvature} takes the form of 

\begin{equation}\label{Bakryv}
  R^{m}_{\phi} = R_g + \frac{m(1-m)}{v^2} |\nabla_g v|_g^2 - \frac{2m}{v} \Delta_g v. 
\end{equation}

\begin{defi}\label{conformal definition}
Let $(M^n, g, e^{- \phi} dV_g, m )$ and
$(M^n, \hat{g}, e^{- \hat{\phi}} dV_{\hat{g}}, m )$ be smooth metric measure spaces. We say that they are pointwise conformally equivalent if there is a function $\sigma \in C^{\infty}(M)$ such that
\begin{equation}\label{conformal relation}
\hat{g} = e^{\frac{2 \sigma}{m + n - 2}}g \quad \text{and} \quad  \hat{\phi} = \frac{-m\sigma}{m+n-2} + \phi.
\end{equation} 
$(M^n, g, e^{- \phi} dV_g,  m )$ and
$(\hat{M}^n, \hat{g}, e^{- \hat{\phi}} dV_{\hat{g}},  m )$ are conformally equivalent if there
is a diffeomorphism $F : \hat{M} \to M$ such that 
$(F^{-1}(M), F^{*}g, F^{*}(e^{- \phi} dV_g),  m )$ is pointwise conformally equivalent to $(\hat{M}^n, \hat{g}, e^{- \hat{\phi}} dV_{\hat{g}},  m )$.
\end{defi}

\begin{rem}
We will denote with a hat all quantities computed with respect to the  smooth metric measure space $(M^n, \hat{g}, e^{- \hat{\phi}} dV_{\hat{g}},  m ) $. On the other hand, equalities in \eqref{conformal relation} imply   
\[
e^{- \hat{\phi}} dV_{\hat{g}} = e^{\frac{m + n }{m + n - 2} \sigma} e^{- \phi} dV_g.
\]
\end{rem}

\begin{defi} Given a compact smooth metric measure space $(M^n, g, e^{- \phi} dV_g,  m )$, the weighted Laplacian $\Delta_{\phi}: C^{\infty}(M) \to C^{\infty}(M)$ is an operator defined by
\[
\Delta_{\phi} u = \Delta_g  u  - \nabla_g  u  \cdot \nabla_g  \phi,
\] where $u \in C^{\infty}(M)$.
\end{defi}

\begin{defi}
Let $(M^n, g, e^{- \phi} dV_g,  m )$ be a smooth metric measure space. The weighted conformal Laplacian $L^m_{\phi}$ is given by the operator 
\begin{equation*}
L^m_{\phi} = - \Delta_{\phi	} + \dfrac{m+ n - 2}{4(m + n - 1)} R^{m}_{\phi}.
\end{equation*}
\end{defi}

\begin{pro}\label{conformal laplacian}
Let $(M^n, g, e^{- \phi} dV_g, m)$ and $(M^n, \hat{g}, e^{- \hat{\phi}} dV_{\hat{g}},  m ) $ be two pointwise conformally equivalent smooth metric measure spaces such that \small $\hat{g} = e^{\frac{2 \sigma}{m + n - 2}}g$ \normalsize and $\hat{\phi} = \frac{-m\sigma}{m+n-2} + \phi$. Let us denote by $L^m_{\phi}$ and $\hat{L}^m_{\hat{\phi}}$ their respective weighted conformal Laplacians. Then, we have $\hat{v} =  e^{\frac{\sigma}{m+n-2}}v$, and the following transformation rules
\begin{equation}\label{cambio en laplaciano conforme}
\hat{L}^m_{\hat{\phi}} (w) = e^{-\frac{m + n  + 2}{2(m + n - 2)} \sigma} L^m_{\phi} (e^{ \frac{\sigma}{2}} w),
\end{equation}
\begin{equation}\label{ecuacion conforme2}
\nabla_{\hat{g}} \hat{v } = e^{-\frac{\sigma}{m+n-2}} \left(  \nabla_{g} v  + \frac{v}{m+n-2} \nabla_{g} \sigma \right),
\end{equation}
\begin{equation}\label{escalar}
R_{\hat{g}} = e^{-\frac{2\hat{\sigma}}{m+n-2}}(R_{g} + \frac{2(n-1)}{m+n-2} \Delta_{g} \sigma -  \frac{(n-1)(n-2)}{(m+n-2)^2} |\nabla_{g} \sigma|_{g}^{2}), 
\end{equation} and  
\begin{equation}\label{Ricci}
\begin{array}{ll}
\hat{R}_{ij}(p) &= R_{ij} - \frac{n-2}{m+n-2} \sigma_{ij} + \frac{n-2}{(m+n-2)^2} \sigma_{i} \sigma_{j} + \left( \frac{\Delta_{g} \sigma}{(m+n-2)} -  \frac{n-2}{(m+n-2)^2} |\nabla_{g} \sigma|_{g}^{2}  \right) g_{ij}.
\end{array}
\end{equation}
\end{pro}

\begin{proof}
We mention that the identity \eqref{cambio en laplaciano conforme} appears in \cite{CaseYamabe}. Equality \eqref{escalar} follows from \eqref{cambio en laplaciano conforme} when $m=0$ and $w \equiv 1$. The identity $\hat{v} =  e^{\frac{\sigma}{m+n-2}}v$ follows from the relations $\hat{v}^{m} = e^{-\hat{\phi}}$, $v^{m} = e^{-\phi}$, and $\hat{\phi} = \frac{-m\sigma}{m+n-2} + \phi$.  
The equation \eqref{ecuacion conforme2} follows from equalities $\hat{v} =  e^{\frac{\sigma}{m+n-2}}v$ and $\nabla_{\hat{g}} = e^{-\frac{2\sigma}{m + n - 2} } \nabla_{g}$. For the equation \eqref{Ricci}, see (1.1) in \cite{EscobarAn}.
\end{proof}


 We denote by $(w, \varphi)_{M} = \int_{M} w \varphi \, v^{m} dV_g$ the inner product in $L^{2}(M, v^{m} dV_g)$. Additionally, we denote by $||\cdot||_{2,M}$ the norm in the space $L^{2}(M, v^{m} dV_g)$; in some cases, we use the notation $||\cdot||$ for this norm. $H^1(M, v^{m} dV_g)$ denotes the closure of $C^{\infty} (M)$ with respect to the norm $\int_{M}|\nabla w|_g^{2} + |w|_g^{2}$. Here and subsequently, the integrals are computed using the measure \small $v^{m} dV_g$. \normalsize

\section{Yamabe-type problem}\label{Yamabe}

In this section, we recall some concepts necessary to study the Yamabe-type problem in a smooth measure space $(M^n, g, v^m dV_g)$. In particular, we consider the weighted Yamabe quotient, which generalizes the Sobolev quotient in the case of $m=0$ and a suitable $\mathcal{W}$-functional. These definitions are taken from \cite{CaseYamabe}. Following the presentation in \cite{CaseYamabe}, we also consider the energies of these functionals and some of their properties. 


\subsection{The weighted Yamabe quotient}

We start with the definition of the weighted Yamabe quotient.

\begin{defi}

The \textit{weighted Yamabe quotient} $\mathcal{Q}[M^n, g, v^{m} dV_g]: C^{\infty} (M) \to \R$ of a compact smooth metric measure space $(M^n, g, e^{- \phi} dV_g,  m )$ is, by definition, the functional

\begin{equation*}\label{Yamabe funcional tilda}
\begin{array}{ll}
\mathcal{Q}[M^n, g, v^{m} dV_g](w) = & \dfrac{(L^{m}_{\phi} w, w)_M (\int_{M}| w| ^{\frac{2(m + n - 1)}{m +n - 2}}v^{-1}) ^{\frac{2m }{n}}}{ (\int_{M } |w|^{\frac{2(m + n)}{m +n -2}} )^{\frac{2m + n -2}{n}}}.
\end{array}  
\end{equation*}


The \textit{weighted Yamabe constant} $\Lambda[M^n, g, v^{m} dV_g] \in \R$ of  $(M^n, g, v^{m} dV_g )$ is defined by

\begin{equation*}
\Lambda[M^n, g, v^{m} dV_g] = \inf \{  \mathcal{Q}[M^n, g, v^{m} dV_g](w) : w \in H^1(M, v^{m} dV_g) \setminus \{ 0\} \}.
\end{equation*}

\end{defi}

The \textit{weighted Yamabe problem} is to find a function that minimizes the weighted Yamabe quotient.
  
\begin{rem}\label{simple notacion}
In some cases, when the context is clear, we will not write the dependence of the smooth metric measure space: for example, we write 
$\mathcal{Q}$ and $\Lambda$ instead of $\mathcal{Q}[M^n, g, v^{m} dV_g]$ and $\Lambda[M^n, g, v^{m} dV_g]$, respectively. We note that since $C^{\infty} (M)$ is dense in $H^1(M, v^{m} dV_g)$ and $\mathcal{Q}(|w|) = \mathcal{Q}(w)$, it is sufficient to consider the weighted Yamabe constant by minimizing over the space of nonnegative smooth functions on $M$, and we will subsequently make this assumption without further comment. 
\end{rem}





Note that the weighted Yamabe quotient is conformally invariant in the sense of definition \ref{conformal definition} (see Proposition 3.3 in \cite{CaseYamabe}). To simplify the computations and to avoid the trivial noncompactness of the weighted Yamabe problem, as in \cite{CaseYamabe}, we consider the next definition:

\begin{defi}\label{interior normalized}
Let $(M^n, g, v^{m} dV_g)$ be a smooth metric measure space. A smooth positive function $w$ is volume-normalized if

\begin{equation*}
\displaystyle \int_{M} w^{\frac{2(m+n)}{m+n-2}}= 1.
\end{equation*}

\end{defi}

\subsection{ $\mathcal{W}$-functional}

Let us start with the definition of the $ \mathcal{W}$-functional considered by Case in \cite{CaseYamabe}.

\begin{defi}
The functional $\mathcal{W}: C^{\infty}(M) \times \R^{+} \to \R$ of a compact smooth metric measure space $(M^n, g, v^{m} dV_g)$ is defined by

\[
\mathcal{W}(w, \tau) = \tau^{\frac{m}{m+n}} (L^{m}_{\phi} w, w)  +  m\int_{M} \left( \tau^{-\frac{n}{2(m+n)}} w^{\frac{2(m+n-1)}{m+n-2}}v^{-1} -   w^{\frac{2(m + n)}{m+n-2}} \right)
\] when $m \in [0, \infty)$. We will refer to this functional as the $\mathcal{W}$-functional.
\end{defi}

Additionally, $\mathcal{W}$ satisfies the following conformal property; see Proposition 3.10 in \cite{CaseYamabe}.

\begin{pro}[\cite{CaseYamabe}]\label{weighted Yamabe energy conformal tilda}
Let $(M^n, g, v^{m} dV_g)$ be a compact smooth metric measure space. In the first component, the $\mathcal{W}-$functional is conformally invariant:

\begin{equation}\label{W conformal Case}
\mathcal{W}[M^n, e^{2 \sigma} g,  e^{(m+n)\sigma}v^{m} dV_g](w, \tau) =
\mathcal{W}[M^n, g, v^{m} dV_g](e^{\frac{(m+n -2)}{2}\sigma} w, \tau)
\end{equation} for all  $\tau > 0$, $\sigma$, and $w \in C^{\infty}(M)$. 

\end{pro}

Since we are interested in minimizing the weighted Yamabe quotient, it is natural to define the following energies as infima of the $\mathcal{W}$-functional. It is also natural to relate one of these energies with the weighted Yamabe constant.

\begin{defi}
Given a compact smooth metric measure space $(M^n, g, v^{m} dV_g)$ and $\tau > 0$, the $\tau$-energy, denoted by $\nu[M^n, g, v^{m} dV_g](\tau) \in  \R$, is defined to be
\[
\nu[M^n, g, v^{m} dV_g](\tau) = \inf \left\{ \mathcal{W} (w, \tau) : w \in H^1(M, v^{m} dV_g),  \int_{ M} w ^{\frac{2(m+n)}{m+n-2}} = 1 \right\}.
\]

Define the energy $\nu[M^n, g, v^{m} dV_g] \in  \R \cup \{ - \infty \}$, where

\[
\nu[M^n, g, v^{m} dV_g] = \inf_{\tau >0} \nu[M^n, g, v^{m} dV_g](\tau).
\]
\end{defi}

The conformal invariance property in the $\mathcal{W}$-functional is transferred to the energies.

\begin{pro}[\cite{CaseYamabe}]\label{properties}
Let $(M^n, g, v^{m} dV_g)$ be a compact smooth metric measure space. Then

\begin{equation*}
\nu[M^n, g, v^{m} dV_g](\tau) = \nu[M^n, ce^{2 \sigma} g,  e^{(m+n)\sigma}v^{m} dV_{cg}] (c\tau),
\end{equation*}

\begin{equation*}
\nu[M^n, g, v^{m} dV_g] = \nu[M^n, ce^{2 \sigma} g,  e^{(m+n)\sigma}v^{m} dV_{cg}]
\end{equation*} for all $c > 0$, and for all $\sigma \in C^{\infty}(M)$.
\end{pro}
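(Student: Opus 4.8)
The plan is to deduce the invariance of the energies straight from the conformal and scale invariance of the $\mathcal{W}$-functional established in Proposition \ref{weighted Yamabe energy conformal tilda}, by exhibiting a value-preserving bijection between the two classes of volume-normalized admissible functions. Write $M := (M^n, g, v^m dV_g)$ and $\hat{M} := (M^n, ce^{2\sigma}g, e^{(m+n)\sigma}v^m dV_{cg})$, and set $p := \frac{2(m+n)}{m+n-2}$.

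First I would recognize $\hat{M}$ as the composition of two moves applied to $M$: the homothety $g \mapsto cg$, which produces $(M^n, cg, v^m dV_{cg})$, followed by the pointwise conformal change with factor $e^{2\sigma}$, which keeps the reference volume form $dV_{cg}$ fixed and multiplies the density by $e^{(m+n)\sigma}$, giving exactly $\hat{M}$. Applying \eqref{W conformal Case} with base $(M^n, cg, v^m dV_{cg})$, then \eqref{W c} with base $M$, and finally replacing $\tau$ by $c\tau$, yields for every $w \in C^\infty(M)$ and $\tau > 0$
\[
\mathcal{W}[\hat{M}]\!\left(w, c\tau\right) = \mathcal{W}[M]\!\left(\kappa\, w, \tau\right), \qquad \kappa := c^{\frac{n(m+n-2)}{4(m+n)}}\, e^{\frac{m+n-2}{2}\sigma}.
\]

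Next I would check that $w \mapsto \kappa w$ carries the admissible class of $\hat{M}$ onto that of $M$. Raising $\kappa w$ to the power $p$ multiplies $w^p$ by $e^{(m+n)\sigma}c^{n/2}$, since the exponents of $e^\sigma$ and of $c$ in $\kappa^p$ are exactly $m+n$ and $n/2$ by construction; combined with $dV_{cg} = c^{n/2}dV_g$, this gives $\int_M (\kappa w)^p\, v^m dV_g = \int_M w^p\, e^{(m+n)\sigma} v^m dV_{cg}$, so the normalization $\int_M(\kappa w)^p\, v^m dV_g = 1$ is literally the same condition as $\int_M w^p\, e^{(m+n)\sigma} v^m dV_{cg} = 1$. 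As $\kappa$ is a fixed positive smooth function, multiplication by it is a linear bijection of $H^1(M, v^m dV_g)$ (the weighted Sobolev spaces of $M$ and $\hat{M}$ coincide as sets on the compact manifold, the weights being comparable), and by density of $C^\infty(M)$ it suffices to compare $\mathcal{W}$-values on smooth functions. Hence $w \mapsto \kappa w$ is a bijection between the normalized admissible sets preserving the $\mathcal{W}$-value at the matched parameters $(\,\cdot\,, c\tau)$ and $(\,\cdot\,, \tau)$, and taking infima gives $\nu[\hat{M}](c\tau) = \nu[M](\tau)$, the first identity.

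For the second identity I would take $\inf_{\tau > 0}$ of the first and use that $\tau \mapsto c\tau$ is a bijection of $(0,\infty)$, so that $\nu[M] = \inf_{\tau>0}\nu[M](\tau) = \inf_{\tau>0}\nu[\hat{M}](c\tau) = \nu[\hat{M}]$. I expect the only real work to be the exponent bookkeeping in the previous step, namely confirming that the single factor $\kappa$ makes both the $\mathcal{W}$-value and the volume-normalization constraint transform compatibly; everything else is the routine passage between $C^\infty$ and $H^1$ afforded by density and the positivity of the weights.
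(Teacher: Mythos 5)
Your proposal is correct, and since the paper gives no proof of this proposition (it is quoted directly from \cite{CaseYamabe}), your derivation from the invariance properties \eqref{W conformal Case} and \eqref{W c} of the $\mathcal{W}$-functional is precisely the argument the citation stands for: factor the change as homothety followed by pointwise conformal change, chain the two identities to get $\mathcal{W}[\hat{M}](w,c\tau)=\mathcal{W}[M](\kappa w,\tau)$ with $\kappa = c^{\frac{n(m+n-2)}{4(m+n)}}e^{\frac{m+n-2}{2}\sigma}$, and check that $\kappa^{p}=c^{n/2}e^{(m+n)\sigma}$ against $dV_{cg}=c^{n/2}dV_g$ matches the volume-normalization constraints before taking infima. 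The exponent bookkeeping and the passage between $C^{\infty}$ and $H^1$ by density are all in order, so nothing is missing.
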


The following proposition shows that it is equivalent to considering the energy instead of the weighted Yamabe constant when the latter is nonnegative. 

\begin{pro}[\cite{CaseYamabe}]\label{W y Yamabe tilda}
Let $(M^n, g, v^{m} dV_g)$ be a compact smooth metric measure space, and let $\nu$ and $\Lambda$ denote the energy and the weighted Yamabe constant, respectively. Then

\begin{itemize}
\item[(a)]  $\nu = -\infty$ if and only if $\Lambda \in [-\infty, 0)$;
	
 \item[(b)]  $\nu = -m$ if and only if $\Lambda = 0$; and
 \item[(c)]  $\nu > -m$ if and only if $\Lambda > 0$. Moreover, in this case we obtain

\begin{equation*}
	\nu = \frac{2m + n }{2} \left[\frac{2 \Lambda}{n}  \right]^{\frac{n}{2m+n}} - m
\end{equation*} and if $w$ is a volume-normalized function, we have that $(w, \tau)$ is a minimizer of $\nu$ with

\begin{equation*}\label{nu lambda}
	\tau = \left[ \dfrac{n \int_{M}  w^{\frac{2(m+n-1)}{m+n-2}}v^{-1} }{2 (L^{m}_{\phi} w, w)} \right]^{\frac{2(m+n)}{2m + n}}.
\end{equation*} if and only if $w$ is a minimizer of $\Lambda$.

\end{itemize}

\end{pro}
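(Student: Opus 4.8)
The plan is to collapse the two-variable minimization defining $\nu$ into an explicit one-variable problem in $\tau$ and then to recognize the outcome as a monotone function of the Yamabe quotient. By Remark \ref{simple notacion} I restrict to non-negative volume-normalized $w$, so that $\int_M w^{2(m+n)/(m+n-2)} = 1$. Writing $A := (L^{m}_{\phi} w, w)$ and $B := \int_M w^{2(m+n-1)/(m+n-2)} v^{-1}$, this normalization turns the Yamabe quotient into $\mathcal{Q}(w) = A\, B^{2m/n}$ and the $\mathcal{W}$-functional into the one-variable expression $\mathcal{W}(w,\tau) = A\,\tau^{m/(m+n)} + mB\,\tau^{-n/(2(m+n))} - m$. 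Since $\nu = \inf_{\tau}\inf_{w}\mathcal{W} = \inf_{w}\inf_{\tau}\mathcal{W}$, I would first compute, for each fixed $w$, the inner infimum $h(w) := \inf_{\tau>0}\mathcal{W}(w,\tau)$. (The case $m=0$ is degenerate: $\mathcal{W}$ is then $\tau$-independent and the statement reduces at once to $\nu = \Lambda$, so I assume $m>0$ below.)

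Next I would carry out the minimization in $\tau$. Setting $\partial_\tau \mathcal{W} = 0$ gives $\tau^{(2m+n)/(2(m+n))} = nB/(2A)$, whose unique positive root is precisely the value $\tau = (nB/2A)^{2(m+n)/(2m+n)}$ of \eqref{nu lambda}. When $A>0$ the function $\tau \mapsto \mathcal{W}(w,\tau)$ tends to $+\infty$ as $\tau \to 0^+$ and as $\tau \to \infty$ and has this single critical point, so the root is its global minimum; substituting back and simplifying the exponents (using $\tfrac{m}{m+n}+\tfrac{n}{2(m+n)} = \tfrac{2m+n}{2(m+n)}$ and the identity $\tfrac{2m+n}{n}(n/2)^{2m/(2m+n)} = \tfrac{2m+n}{2}(2/n)^{n/(2m+n)}$) yields
\[
h(w) = \frac{2m+n}{2}\left(\frac{2\,\mathcal{Q}(w)}{n}\right)^{\frac{n}{2m+n}} - m .
\]
I would then record the two degenerate regimes: if $A=0$ the term $mB\,\tau^{-n/(2(m+n))}$ decreases to $0$, so $h(w)=-m$ as an unattained infimum, in agreement with the displayed formula since $\mathcal{Q}(w)=0$; and if $A<0$ the term $A\,\tau^{m/(m+n)} \to -\infty$, forcing $h(w)=-\infty$, in which case $\mathcal{Q}(w)<0$.

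Finally I would minimize $h$ over admissible $w$. The map $G(t) := \tfrac{2m+n}{2}(2t/n)^{n/(2m+n)} - m$ is continuous and strictly increasing on $[0,\infty)$, so whenever $\mathcal{Q} \ge 0$ on all of its domain one has $\nu = \inf_w G(\mathcal{Q}(w)) = G(\inf_w \mathcal{Q}(w)) = G(\Lambda)$, which is the displayed value of $\nu$; the equality case of this monotone composition gives the minimizer correspondence, namely a volume-normalized $w$ realizes $\Lambda$ iff the pair $(w,\tau)$ with $\tau$ as in \eqref{nu lambda} realizes $\nu$. The three dichotomies then follow: if some $w$ has $\mathcal{Q}(w)<0$ --- equivalently $\Lambda \in [-\infty,0)$ --- then $h(w)=-\infty$ and hence $\nu=-\infty$; while if $\Lambda \ge 0$ then every $\mathcal{Q}(w)\ge 0$ and the formula gives $\nu=-m$ exactly when $\Lambda=0$ and $\nu>-m$ exactly when $\Lambda>0$.

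I expect the main obstacle to be the bookkeeping at the boundaries between these regimes rather than any single calculation: one must justify interchanging the two infima, handle the non-attainment of the inner infimum when $A\le 0$ through minimizing sequences, and confirm that passing $\inf_w$ through the monotone $G$ transfers both the value and the minimizers of $\mathcal{Q}$ to $\nu$. The constant identity and the order-of-vanishing argument confirming that the critical $\tau$ is a global minimum (valid only when $A>0$, which is where the case split genuinely originates) are routine but should be verified carefully.
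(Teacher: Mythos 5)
Your proposal is correct. Note that there is no in-paper proof to compare against: Proposition \ref{W y Yamabe tilda} is imported from Case \cite{CaseYamabe}, and your argument --- fixing a volume-normalized $w$, writing $\mathcal{W}(w,\tau)=A\tau^{m/(m+n)}+mB\tau^{-n/(2(m+n))}-m$, minimizing explicitly in $\tau$ with the case split on the sign of $A=(L^{m}_{\phi}w,w)$, and then passing the continuous, strictly increasing map $G(t)=\tfrac{2m+n}{2}(2t/n)^{n/(2m+n)}-m$ through $\inf_w$ --- is precisely the standard route by which the result is established in the cited reference, and all your exponent identities and the identification of the critical $\tau$ with \eqref{nu lambda} check out. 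The points you flag as potential obstacles are indeed routine: interchanging the two infima over a product set is automatic; $0<B<\infty$ holds for any nonzero non-negative $w\in H^1$ by Sobolev embedding since $\tfrac{2(m+n-1)}{m+n-2}\le\tfrac{2n}{n-2}$ for $m\ge 0$; and the strict monotonicity of $G$ on $[0,\infty)$ is exactly what transfers both the value ($\nu=G(\Lambda)$) and the minimizers between the two variational problems, including the unattained inner infimum when $A\le 0$ and the $\tau$-independent case $m=0$.
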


Next, we consider the Euler-Lagrange equation of the $\mathcal{W}$-functional.

\begin{pro}[\cite{CaseYamabe}]\label{funcion realiza nu tilda}
Let $(M^n, g, v^{m} dV_g)$ be a compact smooth metric measure space $(M^n, g, v^{m} dV_g)$. Fix $\tau > 0$ and consider the map $\xi \to \mathcal{W}(\xi, \tau)$, where every $\xi$ is a volume-normalized function in $H^{1}(M, v^m dV_g)$. Suppose that $w \in H^{1}(M, v^m dV_g)$ is a critical point of this map. Then, $w$ is a weak solution of

\begin{equation}\label{ecuacion tau tilda}
\begin{array}{c}
\tau^{\frac{m}{m+n}}  L^{m}_{\phi}w + \frac{m(m+n-1)}{m+n-2} \tau^{-\frac{n}{2(m + n)}}  w^{\frac{m+n}{m+n-2}}v^{-1} = c w^{\frac{m+n+2}{m+n-2}}, \\
\end{array}
\end{equation} for some constant $c$. Furthermore, if $(w, \tau)$ minimizes the $\nu$-energy, then

\begin{equation*}\label{ecuacion tau tilda c}
c = \frac{(2m + n - 2)(m + n)}{(2m + n)(m + n - 2)} (\nu + m).
\end{equation*}

\end{pro}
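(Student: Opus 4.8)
The plan is to treat $\xi \mapsto \mathcal{W}(\xi,\tau)$ (for fixed $\tau$) as a constrained variational problem, with the volume‑normalization $\int_M \xi^{\frac{2(m+n)}{m+n-2}} = 1$ as the constraint, and apply the Lagrange multiplier rule. Abbreviating $p = \frac{2(m+n)}{m+n-2}$ and $q = \frac{2(m+n-1)}{m+n-2}$, I would first record the first variations. Since $L^m_\phi$ is self‑adjoint with respect to the inner product $(\cdot,\cdot)_M$ (because $\Delta_\phi$ is self‑adjoint for the measure $v^m dV_g$), the derivative of $\xi \mapsto (L^m_\phi \xi, \xi)$ at $w$ in direction $\varphi$ is $2(L^m_\phi w, \varphi)$, and a direct computation gives, for all $\varphi \in H^1(M)$,
\[
D\mathcal{W}(w)[\varphi] = 2\tau^{\frac{m}{m+n}}(L^m_\phi w, \varphi) + m q\, \tau^{-\frac{n}{2(m+n)}}\int_M w^{q-1}\varphi\, v^{-1} - m p \int_M w^{p-1}\varphi,
\]
while $G(\xi) = \int_M \xi^p$ has $DG(w)[\varphi] = p\int_M w^{p-1}\varphi$. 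At a critical point there is a multiplier $\lambda$ with $D\mathcal{W}(w) = \lambda\, DG(w)$. Using $q-1 = \frac{m+n}{m+n-2}$, $p-1 = \frac{m+n+2}{m+n-2}$ and $\frac{q}{2} = \frac{m+n-1}{m+n-2}$, moving the $-mp$ term to the right, dividing by $2$ and collecting terms turns this identity into precisely the weak form of \eqref{ecuacion tau tilda}, with $c = \frac{(m+n)}{m+n-2}(\lambda + m)$. This proves the first assertion.

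For the sharp value of $c$ when $(w,\tau)$ minimizes the $\nu$-energy, I would extract three scalar relations. Set $A = \tau^{\frac{m}{m+n}}(L^m_\phi w, w)$ and $B = \tau^{-\frac{n}{2(m+n)}}\int_M w^{q}v^{-1}$. Testing \eqref{ecuacion tau tilda} against $\varphi = w$, and using $w^{\frac{m+n}{m+n-2}}\!\cdot w = w^{q}$, $w^{\frac{m+n+2}{m+n-2}}\!\cdot w = w^{p}$ together with the normalization $\int_M w^p = 1$, yields the first relation $c = A + \frac{m(m+n-1)}{m+n-2}B$. Since $\mathcal{W}(w,\tau) = \nu$ at the minimizer and $\int_M w^p = 1$, the definition of $\mathcal{W}$ gives the second relation $\nu + m = A + mB$. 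The relation that actually pins down $c$ comes from the fact that the pair minimizes $\mathcal{W}$ in the $\tau$-variable as well: imposing $\partial_\tau \mathcal{W}(w,\tau) = 0$, differentiating the two powers of $\tau$ and multiplying through by $\tau$ gives $\frac{m}{m+n}A - \frac{mn}{2(m+n)}B = 0$, that is $A = \frac{n}{2}B$ (the degenerate case $m=0$, in which the $B$-terms drop and one gets $c = \nu = \nu+m$ directly, being handled separately).

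Finally I would solve the resulting linear system. From $A = \frac{n}{2}B$ and $\nu + m = A + mB$ one obtains $B = \frac{2(\nu+m)}{2m+n}$ and $A = \frac{n(\nu+m)}{2m+n}$; substituting into $c = A + \frac{m(m+n-1)}{m+n-2}B$ and simplifying the bracket via $n + \frac{2m(m+n-1)}{m+n-2} = \frac{(2m+n-2)(m+n)}{m+n-2}$ produces exactly \eqref{ecuacion tau tilda c}. I expect the only genuinely delicate point to be the analytic justification of the variations and of testing the Euler–Lagrange equation against $w$ for a merely $H^1$ critical point: one must verify that all integrands lie in $L^1$, which on the compact $M$ follows from the (sub)critical Sobolev embeddings attached to the exponents $p$ and $q$. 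The algebra itself is routine; what makes it error‑prone is the bookkeeping with the measure $v^m dV_g$ and the extra weight $v^{-1}$ appearing in the middle term.
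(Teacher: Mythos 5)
Your proof is correct. Note that the paper itself gives no proof of this proposition---it is quoted directly from Case's paper \cite{CaseYamabe}---so there is no internal argument to compare against; your route (Lagrange multipliers for the constrained functional, then the three scalar relations $c = A + \frac{m(m+n-1)}{m+n-2}B$, $\nu+m = A+mB$, and $A = \frac{n}{2}B$ coming from stationarity of $\mathcal{W}$ in the $\tau$-variable, with the case $m=0$ handled separately) is the standard variational argument such a statement requires, and your algebra, including the identity $n + \frac{2m(m+n-1)}{m+n-2} = \frac{(2m+n-2)(m+n)}{m+n-2}$, checks out.
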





\subsection{\bf Euclidean space as the model space for the weighted Yamabe problem}

In this subsection, we consider a family of functions together with some of its properties, which are fundamental in our proof of the Aubin-type existence result for minimizers of the Yamabe quotient. 

We start by mentioning Del Pino and Dolbeault's result in \cite{DelPino} regarding the sharp Gagliardo-Nirenberg-Sobolev inequalities in the manner as Case presented it in \cite{CaseYamabe}.

\begin{teo}[\cite{DelPino}]\label{teoGN} Fix $m \in [0, \infty)$. For all $w \in H^{1}(\R^{n}) \cap L^{\frac{2(m+n-1)}{m+n-2}} (\R^{n})$, it holds that

\begin{equation}\label{GNS}\Lambda_{m, n} \left( \int_{\R^{n}} | w| ^{\frac{2(m + n)}{m +n - 2}} \right) ^{\frac{2m + n -2}{n}}   \leq \left( \int_{\R^{n}} |\nabla w|^{2}  \right) \left( \int_{\R^{n}} |w|^{\frac{2(m + n - 1)}{m + n -2}} \right)^{\frac{2m }{n}} \end{equation}, where the constant $\Lambda_{n, m}$ is given by

\begin{equation*}\label{c GNS}\Lambda_{m,n} =   \dfrac{n\pi(m+n-2)^2}{(2m + n - 2)}   \left(\dfrac{2(m+n-1)}{(2m + n -2)} \right)^{\frac{2m}{n}} \left( \dfrac{ \Gamma(\frac{2m + n}{2})}{\Gamma(m + n)} \right)^{\frac{2}{n}}.\end{equation*}

Moreover, the equality holds in \eqref{GNS} if and only if $w$ is a constant multiple of the function $w _{\epsilon, x_0}$ defined on $\R^{n}$ by

\begin{equation}\label{funcion Del Pino}w _{\epsilon, x_0} (x) := \left(\frac{2 \epsilon}{\epsilon^2 + |x - x_0|^2}  \right)^{\frac{m+n-2 }{2}}\end{equation} where $\epsilon > 0 $, and $x_0 \in \R^{n}$. \end{teo}

Fix $n\geq 3$ and $m\geq 0$. Next, we consider a particular case of function in \eqref{funcion Del Pino} defined by 

\begin{equation}\label{burbuja Case}
\varphi_{x_0, \tau}(x) = \tau^{-\frac{n(m+n-2)}{4(m+n)}}\left(1 + \frac{c(m,n)}{\tau}|x - x_0|^2 \right)^{-\frac{(m+n-2)}{2}}.
\end{equation} where $c(m,n) = \frac{m+n-1}{(m+n-2)^2}$, $x_0 \in \R^n $ and $\tau > 0$.

We denote the normalization of $\varphi_{x_0, \tau}$ by

\[
\tilde{\varphi}_{x_0, \tau} = V^{-\frac{m+n-2}{2(m+n)}} \varphi_{x_0, \tau},
\] where

\begin{equation}\label{Definition V}
V = \int_{\R^n} \varphi_{0, 1}^{\frac{2(m+n)}{m+n-2}} dx = \int_{\R^n} \varphi_{x_0, \tau}^{\frac{2(m+n)}{m+n-2}} dx;
\end{equation} we used the change of variables in the second equality. On the other hand, a computation shows

\begin{equation}\label{ecuacion burbujas case}
- \tau^{\frac{m}{m+n}} \Delta \varphi_{x_0, \tau} + \frac{m(m+n-1)}{m+n-2}  \frac{\varphi_{x_0, \tau}^{\frac{m+n}{m+n-2}}}{\tau^{\frac{n}{2(m + n)}}} = \frac{(m+n)(m+n-1)}{m+n-2} \varphi_{x_0, \tau}^{\frac{m+n+2}{m+n-2}}.
\end{equation}

The definition of $\varphi_{x_0,\tau}$, the definition of $V$, and identity \eqref{ecuacion burbujas case} correspond to (5.1), (5.2), and (5.3) in \cite{CaseYamabe}, respectively. Since $\tilde{\varphi}_{x_0, \tau}$ is a volume-normalized function and attains the infimum of the weighted Yamabe quotient (see Theorem \ref{teoGN}), by Proposition \ref{W y Yamabe tilda} there is $\tilde{\tau} > 0$ such that 

\begin{equation}\label{nu Rn case}
\begin{array}{rl}
\nu(\R^{n}, dx^{2}, 1^{m} dV_g)  + m & = \mathcal{W}(\R^{n},  dx^{2}, 1^{m} dV_g)(\tilde{\varphi}_{x_0, \tau},  \tilde{\tau}) + m \vspace{0.2cm}\\
& = \frac{\tilde{\tau}^{\frac{m}{m+n}}} {V^{\frac{m+n-2}{m+n}} } \int_{\R^{n}} |\nabla \varphi_{x_0, \tau}|^{2} + \frac{m\tilde{\tau}^{-\frac{n}{2(m + n)}}}{ V^{\frac{m+n-1}{m+n}}} \int_{\R^{n}}  \varphi_{x_0, \tau}^{\frac{2(m+n-1)}{m+n-2}}.
\end{array}
\end{equation}  

It follows that $\tilde{\tau} = \tau V^{-\frac{2}{2m+n}}$, since $\tilde{\varphi}_{x_0, \tau}$ satisfies the equation \eqref{ecuacion tau tilda} and $\varphi_{x_0, \tau}$ satisfies the equation \eqref{ecuacion burbujas case}, respectively.

The next result, which corresponds to Theorem 7.1 in \cite{CaseYamabe}, links the weighted Yamabe constants of $(M^n, g, v^m dV_g)$ and $(M^n, g, v^{m+1} dV_g)$ with the weighted Yamabe constants for the Euclidean space with parameters $m$ and $m+1$. This result allows us to prove the existence of a minimizer for the weighted Yamabe constant in an inductive argument for the parameter $m$.

\begin{teo}[\cite{CaseYamabe}]\label{inductive constants}

Let $(M^n, g, v^{m} dV_g)$ be a compact smooth metric measure space with a nonnegative weighted Yamabe constant, and suppose that the weighted Yamabe constant was minimized by a smooth positive function. Then

\begin{equation*}
\Lambda[M^n, g, v^{m+1} dV_g] \leq \Lambda[\R^n, dx^2, dV, m+1] \dfrac{\Lambda[M^n, g, v^{m} dV_g]}{\Lambda[\R^n, dx^2, dV, m]}.
\end{equation*}
\end{teo}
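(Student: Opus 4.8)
The plan is to realize the increment $m\mapsto m+1$ as the addition of one Euclidean dimension and to reduce the claim to the classical product inequality for Yamabe-type constants. Conceptually, for integer $m$ a smooth metric measure space $(M^n,g,v^m dV_g)$ corresponds, under the warped product $M^n\times_v \mathbb{F}^m$, to an ordinary Riemannian manifold of dimension $m+n$ on which $L^m_\phi$, $R^m_\phi$ and $\mathcal{Q}$ become the genuine conformal Laplacian, scalar curvature and Yamabe quotient restricted to fiber-invariant functions; passing from $m$ to $m+1$ adjoins one factor to the fiber, i.e. one Euclidean direction. Under this dictionary the asserted inequality is exactly the statement that the Yamabe constant of a product with a line is controlled by the Yamabe constant of the factor times the ratio $\Lambda[\R^n,m+1]/\Lambda[\R^n,m]$ of the corresponding Euclidean Sobolev constants. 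I would carry the argument out analytically on $M$ itself through the $\mathcal{W}$-functional, whose auxiliary variable $\tau$ plays the role of the scale in the new direction.

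First I would pass to energies. By Proposition \ref{W y Yamabe tilda} the hypothesis $\Lambda[M^n,g,v^m dV_g]\ge 0$ lets me replace each of the four weighted Yamabe constants by the corresponding $\nu$-energy through the monotone relation $\nu=\frac{2m+n}{2}[2\Lambda/n]^{n/(2m+n)}-m$, so that the target inequality becomes an upper bound for $\nu[M^n,g,v^{m+1}dV_g]$ in terms of $\nu[M^n,g,v^{m}dV_g]$ and the two Euclidean energies. The degenerate case $\Lambda[M^n,g,v^m dV_g]=0$, where the right-hand side vanishes, I would treat separately by a limiting argument built from a conformal representative with $\widehat{R}^m_\phi\equiv 0$.

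Next comes the core step. Let $w$ be the given smooth positive minimizer of $\Lambda[M^n,g,v^m dV_g]$, which I normalize to be volume-normalized; by Proposition \ref{W y Yamabe tilda} the pair $(w,\tau_0)$ minimizes $\nu[M^n,g,v^m dV_g]$ for the $\tau_0$ of \eqref{nu lambda}, and $w$ solves the Euler--Lagrange equation \eqref{ecuacion tau tilda}. I would then feed into $\mathcal{W}_{m+1}$ a product-type test function of separated form whose $M$-dependence is $w$ and whose dependence on the auxiliary scale reproduces the profile of the $(m+1)$-Euclidean extremal $\varphi_{x_0,s}$ of \eqref{burbuja Case}. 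Using the bubble equation \eqref{ecuacion burbujas case}, the expression \eqref{nu Rn case} for the Euclidean energy, and the conformal invariance of Proposition \ref{weighted Yamabe quotient conformal}, I would show that $\mathcal{W}_{m+1}$ on this test function factorizes: the $M$-integrals collapse, via the Euler--Lagrange equation and volume-normalization, to the $m$-energy of $M$, while the remaining one-dimensional integrals collapse to the Euclidean bubble integrals. Optimizing the free scale then produces $\nu[\R^n,m+1]$ relative to $\nu[\R^n,m]$, yielding the bound on $\nu[M^n,g,v^{m+1}dV_g]$, after which I invert the $\nu$--$\Lambda$ correspondence to recover \eqref{inductive constants}.

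The main obstacle is precisely this factorization: one must produce a test function for the $(m+1)$-problem whose critical exponent $\frac{2(m+n+1)}{m+n-1}$ and weight $v^{m+1}$ genuinely separate into an $M$-part matching the $m$-problem (exponent $\frac{2(m+n)}{m+n-2}$, weight $v^m$) and a clean one-dimensional Euclidean part, so that cross terms vanish and no uncontrolled error survives. This is where the differing conformal weights and the discrepancy $R^{m+1}_\phi-R^m_\phi=\frac{1}{m(m+1)}|\nabla\phi|^2$ must be absorbed, and it is essential here that $w$ is an \emph{exact} minimizer, so that the Euler--Lagrange equation is available and the $M$-integrals reduce to constants, rather than merely an almost-minimizer.
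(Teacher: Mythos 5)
Before comparing, note that the paper itself contains no proof of Theorem \ref{inductive constants}: it is imported verbatim from Case (Theorem 7.1 of \cite{CaseYamabe}) and used only as a black box in the inductive step \eqref{inductivo}. So your proposal must be measured against Case's argument, and there it has a genuine gap --- precisely at the step you yourself flag as the main obstacle. The test function is never constructed, and the construction you outline cannot exist as described: the $(m+1)$-problem is a variational problem over functions on the $n$-dimensional manifold $M$ itself, so there is no product manifold and no extra coordinate in which to take a ``separated'' test function. The warped-product dictionary that would supply such a coordinate is available only for $m \in \N$, while the theorem (and the paper's induction, whose base case is a real $m \in [0,1]$) requires every real $m \geq 0$. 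Nor can the parameter $\tau$ of the $\mathcal{W}$-functional carry ``the profile of the $(m+1)$-Euclidean extremal'': $\tau$ is a single positive number, so a test function ``whose dependence on the auxiliary scale reproduces the profile of $\varphi_{x_0,s}$'' is either constant in that argument or is not a function on $M$ at all.

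The missing idea is the identification of where the extra degree of freedom comes from: it is the density $v$ itself. After conformally normalizing, as you do, so that the $m$-minimizer is the constant function $1$ (hence $R^m_{\phi}$ is a nonnegative constant), the correct test functions for the $(m+1)$-problem are explicit functions of $v$ alone, of the form $u_{\alpha,\beta} = (\alpha + \beta v)^{-\frac{m+n-1}{2}}$. One can see this family is forced by the model case: conformally changing $(\R^n, dx^2, dV, m)$ by its own bubble $(1+|x|^2)^{-\frac{m+n-2}{2}}$ gives $\hat{g} = (1+|x|^2)^{-2}dx^2$ and $\hat{v} = (1+|x|^2)^{-1}$, and the $(m+1)$-bubbles centered at the origin then become exactly $(c + (1-c)\hat{v})^{-\frac{m+n-1}{2}}$, i.e.\ affine functions of the density raised to that power. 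Evaluating $\mathcal{Q}[M^n,\hat{g},\hat{v}^{m+1}dV_{\hat{g}}]$ on this family --- the constancy of $R^m_{\phi}$ and integration by parts reduce everything to integrals of powers of $v$, compared by one-dimensional identities in the spirit of Lemma \ref{comparacion integrales} --- is what produces the factor $\Lambda[\R^n, dx^2, dV, m+1]/\Lambda[\R^n, dx^2, dV, m]$, and it does so for every real $m \geq 0$. Two further slips: your discrepancy formula $R^{m+1}_{\phi} - R^{m}_{\phi} = \frac{1}{m(m+1)}|\nabla \phi|^2$ holds when $\phi$ is held fixed, but in the theorem the density $v$ is held fixed and $\phi = -m\log v$ changes with $m$; the relevant difference is $-2\Delta v/v - 2m|\nabla v|^2/v^2$, which has no sign. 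And in the borderline case $\Lambda[M^n,g,v^m dV_g] = 0$ your ``limiting argument'' is unspecified: after passing to the representative with $R^m_{\phi} \equiv 0$ one must still exhibit test functions whose $(m+1)$-quotient tends to $0$ or below, which again requires the explicit family above rather than anything that follows formally from the flat representative.
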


\section{An Aubin-type existence theorem}\label{Aubin}

In this section, we are dedicated to proving Theorem A. Briefly, the proof consists of performing some computation with a family of test functions in the $\mathcal{W}$-functional around a point $p$ where the Weyl tensor is nonzero. To simplify these computations, we consider a smooth metric measure space conformal to the original so that $T(p) = 0$ and the new density $v$ has suitable properties. The family of functions are supported in a small neighborhood of the point $p$. Such functions are of the form of a standard cutoff function times the family of functions $\varphi_{x_0, \tau}$ defined by \eqref{burbuja Case}. Then,  taking the limit when the parameter $\tau$ goes to zero, we prove that the entropy is less than the entropy of Euclidean space when $m < \delta$, for some $\delta >0$. By Proposition \ref{W y Yamabe tilda} and Theorem \ref{Aubin Case}, we obtain the result for $m < \delta$. Finally,  using Theorem \ref{inductive constants} in an inductive argument, we obtain the result for $m \in  \bigcup \limits_{i \in \mathbb{N} \cup \{ 0 \}} [i,i + \delta)$. 

In this section, on the right-hand side of every equality or inequality that involves the terms $R^{m}_{\phi}$, $v^k$, $R_{g}$, $R_{ij}$, $T_{ij}$ $R_{ijkl}$ or $W_{ijkl}$,  will be calculated in the point $p$ and we will omit this point from notation. We will denote $ \Delta^2 f $ instead $\Delta(\Delta f)$ where $f \in C^{\infty}(M)$. The big O notation is considered when the variables involved are going to zero. Additionally, in this section, $C$ is a positive constant that depends only on the smooth metric measure $(M^n,  g, v^{m} dV_g)$ and possibly changes from line to line or within the same line. Finally, we use the Einstein summation convention when an index variable appears twice in a single term. In some few cases, we will write the summation notation when the indexes appear more than twice or when the index is not over all of the values.


\subsection{Suitable smooth measure space} In this subsection, we consider a suitable conformally smooth measure space to begin the proof of Theorem A.

\begin{lema}\label{cambio conforme especial}
Let $(M^n, g, v^{m} dV_g)$ be a compact smooth metric measure space. There exists a conformal smooth metric measure space $(M^n,  \hat{g}, \hat{v}^{m} dV_{\hat{g}})$ such that for $p \in M$ we have $\hat{v}(p) = 1$, $\nabla_{\hat{g}} \hat{v}(p) = 0$, and $Ric_{\hat{g}}(p) = 0$.  
\end{lema}

\begin{proof}
If $\sigma \in  C^{\infty}(M)$, $\tilde{g} = e^{\frac{2\sigma}{m+n-2}}g$, by equations \eqref{ecuacion conforme2} and \eqref{escalar} we can take $\sigma$ and $\tilde{v} = e^{\frac{2\sigma}{m+n-2}}v$, such that we have $\tilde{v}(p) = 1$, $\nabla_{\tilde{g}} \tilde{v}(p) = 0$ and $R_{\tilde{g}}(p) =0$. Then, we change our original smooth metric measure space by $(M^n,  \tilde{g}, \tilde{v}^{m} dV_{\tilde{g}})$. We denote with tilde all quantities computed with respect to the smooth metric measure space $(M^n, \tilde{g}, \tilde{v}^{m} dV_{\tilde{g}}) $.

 Let $\hat{g} = e^{\frac{2\hat{\sigma}}{m+n-2}}\tilde{g}$ and $\hat{v} =  e^{\frac{\hat{\sigma}}{m+n-2}}\tilde{v}$ be such that in the point $p$, $\hat{\sigma}$ satisfies that $\hat{\sigma} = 0$, $\nabla_{\tilde{g}} \hat{\sigma} = 0$, and $ \hat{\sigma}_{ij} = \frac{m+n-2}{n-2}\tilde{T}_{ij} = \frac{m+n-2}{n-2}\tilde{R}_{ij}$. Since $\tilde{T}_{ij}$ is trace-free, we obtain in the point $p$ that $\Delta_{\tilde{g}} \hat{\sigma} = 0$. We denote by a hat all quantities computed with respect to the metric $\hat{g}$. In $p$, the relation \eqref{Ricci} for the Ricci tensor in the metrics $\hat{g}$ and $\tilde{g}$ yields $\hat{R}_{ij}(p) = 0$. Given that in $p$ we have $\nabla_{\hat{g}} \tilde{v} = 0$, $\nabla_{\hat{g}} \hat{\sigma} = 0$, and transformation rule \eqref{ecuacion conforme2} yields $\nabla_{\hat{g}} \hat{v }(p) = 0$. The smooth metric measure space requested is $(M^n,  \hat{g}, \hat{v}^{m} dV_{\hat{g}})$.  
\end{proof}

In the following lemma, we present some computation for a smooth metric measure space which satisfies the thesis of Lemma \ref{cambio conforme especial}.

\begin{lema}\label{Calculus v}
Let $(M^n, g, v^{m} dV_g)$ be a compact smooth metric measure space such that for $p \in M$ we have $v(p) = 1$, $\nabla_{g} v(p) = 0$, and $Ric_{g}(p) = 0$. Then, for $k \in \R$, we have


\begin{equation}\label{Laplacian vk}
  \Delta_g v^k(p) = k \Delta_g v(p),
\end{equation}

\begin{equation}\label{doble laplaciano vm}
\begin{array}{rl}
    \Delta_{g}^2 v^k (p) =  & (v^k)_{ijij} \\
    = &  2k(k-1) |\text{Hess} \, v|_{g}^2 + k(k-1)(\Delta v)^2 + k \Delta_{g}^2 v + k(k-1) (\Delta_{g} v)^2,
    \end{array}
\end{equation} and

\begin{equation}\label{Delta Rm}
    \Delta_{g} R^m_{\phi} (p) = \Delta_{g} R_{g} + 2m(1-m) |\text{Hess} \, v|_{g}^2 - 2m\Delta^2 v + 2m (\Delta v )^2.
\end{equation}

\end{lema}

\begin{proof}

We will consider normal coordinates around $p$ to compute the covariant derivatives. By hypothesis $v(p) = 1$ and $\nabla_{g} v(p) = 0$, we obtain

\begin{equation*}
   \Delta_g v^k(p) = k\Delta_gv + k(k-1)|\nabla_g v|^{2}_{g} = k\Delta_gv.
\end{equation*}

The Ricci formula implies  

\begin{equation}\label{vijj}
(v^k)_{ijj}(p) = (v^k)_{jij} = (v^k)_{jji} + R^{s}_{jij}(v^k)_{s} = (v^k)_{jji} + R_{sj} (v^k)_{s} = (v^k)_{jji}
\end{equation} and \small
\begin{equation*}
(v^k)_{ijij}(p) = ((v^k)_{iij})_j = ((v^k)_{iij} + R^{s}_{iji}(v^k)_s)_j = (v^k)_{iijj} + R^{s}_{iji;j} (v^k)_s + R^{s}_{iji}(v^k)_{sj}= \Delta^2 v^k.
\end{equation*} 
\normalsize

To compute the second equality in \eqref{doble laplaciano vm},  
we will use $v(p) = 1$, $\nabla_{g} v(p) = 0$, and \eqref{vijj} to obtain \small
\begin{equation}\label{Delta 2}
\begin{array}{rl}
\Delta_{g}^2 v^k (p) = &k(k-1)(k-2)(k-3)|\nabla_{g}v|_{g}^4 + 2k(k-1)(k-2)|\nabla_{g}v|^2 \Delta_{g}v \\ 
& + 4k(k-1)(k-2) \text{Hess} \,v(\nabla_{g}v, \nabla_{g}v) 
+ 2k(k-1) |\text{Hess} \, v|_{g}^2 + k(k-1) (\Delta_{g} v)^2 \\
& + 4k(k-1) \langle \nabla v, \nabla \Delta v \rangle  + k \Delta_{g}^2 v + 2k(k-1) Ric_{g}(\nabla v, \nabla v)\\
=&  2k(k-1) |\text{Hess} \, v|_{g}^2 + k \Delta_{g}^2 v + k(k-1) (\Delta_{g} v)^2.\\
\end{array}
\end{equation}

\normalsize

Finally, by hypothesis, Ricci formula, and formula \eqref{Bakryv} we obtain \small
\begin{equation}
\begin{array}{rl}
\Delta_{g} R^m_{\tilde{\phi}} (p) = &\Delta_{g} (R_{g} + \frac{m(1-m)}{v^2} | \nabla_{g} v|^2 - \frac{2m}{v}\Delta_{g} v )\\
= & \Delta_{g} R_{g}   + 2m(m-3)|\nabla_{g}v|^2 \Delta_{g}v + 2m (\Delta_{g} v)^2 \\ 
&-2m(m-3) \langle \nabla_{g} v, \nabla_{g} \Delta_{g} v \rangle - 2 m \Delta_{g}^2 v + 6m(1-m) |\nabla_{g}v|_{g}^4 \\
&+ 8m(m-1) \text{Hess} \,v(\nabla_{g}v, \nabla_{g}v) + 2m(1-m) |\text{Hess} \, v|_{g}^2 \\
& + 2m(1-m) Ric_{g}(\nabla_{g} v, \nabla_{g} v) \\
= & \Delta_{g} R_{g} + 2m(1-m) |\text{Hess} \, v|_{g}^2 - 2 m \Delta_{g}^2 v + 2m (\Delta_{g} v)^2.
\end{array}
\end{equation} \end{proof}
\normalsize
Next, we write some formulae needed in the proof of Theorem A. In $g$-normal coordinates, it holds that
\begin{equation}\label{dV normal}
\begin{array}{ll}
dV_{g}  = &(1 - \frac{1}{6}R_{ij}x_{i} x_{j} - \frac{1}{12}R_{ij,k}x_{i} x_{j} x_{k} \\
& \, \, - ( \frac{1}{40}R_{ij,kl} + \frac{1}{180}R_{pijr} R_{pklr} - \frac{1}{72}R_{ij} R_{kl})x_{i} x_{j} x_{k} x_{l} + O(|x|^5) )dx
\end{array} %
\end{equation} and
\begin{equation}\label{g -1}
\begin{array}{rl}
g^{ij}(x)  =  \delta_{ij} - \frac{1}{3}R_{iklj} x_{k} x_{l} - \frac{1}{6} R_{iklj,s}  x_{k} x_{l} x_{s} 
- (\frac{1}{20} R_{iklj,su} - \frac{3}{45} R_{iklr}R_{jsur} )
x_{k} x_{l} x_{s} x_{u}  + O(|x|^5),
\end{array}
\end{equation} where $1 \leq i, j, k,l,r, s, u \leq n$, we recall that the coefficients are computed in $p$. For formula \eqref{dV normal}, see Lemma 5.5 in \cite{LeeParker}, and for formula \eqref{g -1}, see formula (5.4) in \cite{LeeParker} and an argument akin to Lemma 2.3 in \cite{Coda2005}.

We recall the following identities
\begin{equation}\label{T}
\begin{array}{l}
T_{ij}  = R_{ij} - \dfrac{R}{n}g_{ij},
\end{array}
\end{equation}
\begin{equation}\label{W}
\begin{array}{l}
R_{ijkl} R^{ijkl} = W_{ijkl}W^{ijkl} + \dfrac{4}{n-2}T_{ij} T^{ij} + \dfrac{2R_{g}^2}{n(n-1)}.
\end{array}
\end{equation}

The identity \eqref{W} is taken from page 272 in \cite{Aubin}.

\vspace{0.3cm}

\subsection{Some estimates}\label{local expansion}  

The underlying idea of the proof of Theorem A is to improve the upper bound estimated in Proposition 6.3 in \cite{CaseYamabe}. For this purpose, we fix a point $p \in  M$ and let $\{x_i \}$ be normal coordinates in some fixed neighborhood $U$, centered at $p := (0, ..., 0)$. Let $0< \epsilon < 1$ be such that $B_{2 \epsilon}(p) \subset U$, where $B_{2 \epsilon}(p)$ is the geodesic ball of radius $2\epsilon$ around $p$. Let $\eta : M \to [0, 1]$ be a cutoff function such
that $\eta \equiv 1$ on $B_{\epsilon}$, $\mathrm{supp} (\eta) \subset B_{2\epsilon}$ and $|\nabla \eta| < C\epsilon^{-1}$ in $A_{\epsilon} := B_{2\epsilon} \smallsetminus B_{\epsilon}$. For each $0 < \tau < 1$, define $f_{\tau} : M \to \R$ by 
\begin{equation}\label{f tau}
f_{\tau} (x_1, . . . , x_{n}) = \eta \varphi_{0,\tau} (x_1, . . . , x_{n}), \end{equation} and set  
\begin{equation}\label{f tau tilde}
\tilde{f}_{\tau} = V_{\tau}^{-\frac{m+n-2}{2(m+n)}} f_{\tau} \end{equation} where
\begin{equation}\label{definition Vtau}
V_{\tau} = \int_{M} f_{\tau}^{\frac{2(m+n)}{m+n-2}} v^m dV_{g}.
\end{equation}

Taking $\tilde{\tau} = \tau V^{-\frac{2}{2m+n}}$, where $V$ is defined by \eqref{Definition V}, the definition of $\mathcal{W}$ and equality \eqref{W conformal Case} in Proposition \ref{weighted Yamabe energy conformal tilda} yield
\begin{equation}\label{w8}
\begin{array}{rl}
\mathcal{W}[M^n,  g, v^{m} dV_{g}](\tilde{f}_{\tau}, \tilde{\tau}) + m 
= & \displaystyle \dfrac{\tilde{\tau}^{\frac{m}{m+n}}}{ V_{\tau}^{\frac{m+n-2}{m+n}} }\left (\int_{B_{2\epsilon}} |\nabla_g f_{\tau}|_{g}^{2} v^{m} 
+ \frac{m+n-2}{4(m+n-1)}R^m_{\phi} f_{\tau}^2 v^{m} dV_{g}  \right) \vspace{0.2cm} \\
& + \displaystyle \dfrac{m\tilde{\tau}^{-\frac{n}{2(m+n)}} }{ V_{\tau}^{\frac{m+n-1}{m+n}} } \int_{B_{2\epsilon}} f_{\tau}^{\frac{2(m+n-1)}{m+n-2}}v^{m-1}	 dV_{g}.
\end{array}
\end{equation}

Next, we establish lemmas to estimate each term on the right-hand side of the equality \eqref{w8}. First, we estimate on the right-hand side of \eqref{w8} the term with the Bakry-\'Emery scalar curvature  $R^{m}_{\phi}$.

\begin{lema}\label{Estimate R}
Let $(M^n, g, v^{m} dV_g)$ be a compact smooth metric measure space such that for $p \in M$, we have $v(p) = 1$, $\nabla_{g} v(p) = 0$, and $Ric_g(p) = 0$. If $f_{\tau}$ is defined by \eqref{f tau}, then, 
\small
\begin{equation}\label{R5 Case}
\begin{array}{ll} %
\displaystyle \int_{B_{2\epsilon} } R^{m}_{\phi} f_{\tau}^2 v^m dV_{g}
 = & - 2m\Delta_g v\frac{\tau^{\frac{n}{m+n}}}{c(m,n)^{\frac{n}{2}}} I_1   + ( \Delta_g  R^{m}_{\phi}  -2m^2  (\Delta_g v)^2 ) \dfrac{\tau^{\frac{n}{m+n} + 1} I_2 }{2nc(m,n)^{\frac{n+2}{2}}} \vspace{0.2cm} \\
&+  O(\tau^{\frac{n}{m+n}+1 + \frac{q}{2}} ) + O(\epsilon^{4 -2m-n} \tau^{\frac{n}{m+n} + m + \frac{n-4}{2}}),
\end{array}
\end{equation} \normalsize where 
\begin{equation}\label{I1 Case}
I_1 = \int_{\R^{n}} (1 + |y|^2)^{-(m+n-2)} dy 
\end{equation} and
\begin{equation}\label{I6 Case}
I_2 = \int_{\R^{n}} |y|^2(1 + |y|^2)^{-(m+n-2)} dy.
\end{equation}
\end{lema}

\begin{proof} 
First, we estimate the term with the Bakry-\'Emery scalar curvature  $R^{m}_{\phi}$ in the region $A_{\epsilon}$. Given that $dV_{g} = (1+\epsilon C)dx$ around $p$, we obtain 
\begin{equation}\label{R1 Case}
\begin{array}{ll}
\displaystyle \int_{A_{\epsilon}}R^m_{\phi} f_{\tau}^2 v^{m} dV_{g} & \leq  C(1+ \epsilon C) \displaystyle \int_{A_{\epsilon}} \varphi_{0, \tau}^2 dx \vspace{0.2cm}\\
& = C(1+ \epsilon C) \tau^{-\frac{n(m+n-2)}{2(m+n)}} \displaystyle \int_{A_{\epsilon}} (1 + \frac{c(m,n)}{\tau}|x|^{2})^{-(m+n-2)} dx \vspace{0.2cm} \\
& = C(1+ \epsilon C) \tau^{\frac{n}{m+n}} \displaystyle \int_{A_{\frac{\epsilon\sqrt{c(m,n)}}{\sqrt{\tau}} }}  (1 + |y|^{2})^{-(m+n-2)} dy \vspace{0.2cm} \\
& = C(1+ \epsilon C) \tau^{\frac{n}{m+n}} \displaystyle \int_{\epsilon\tau^{-\frac{1}{2}}c(m,n)^{\frac{1}{2}}}^{2\epsilon\tau^{-\frac{1}{2}}c(m,n)^{\frac{1}{2}}}(1 + r^2)^{-(m+n-2)}r^{n-1}dr \vspace{0.2cm} \\
& \leq C(1+ \epsilon C) \epsilon^{4-2m-n} \tau^{\frac{n}{m+n} + m + \frac{n-4}{2}}.
\end{array}
\end{equation}

Next, we estimate on the right-hand side of \eqref{w8} the term with the Bakry-\'Emery curvature $R^{m}_{\phi}$ in the region $B_{\epsilon}$. For this purpose, we will use the Taylor expansion around $p$ of $R^{m}_{\phi}$ and $v^m$. Since $v(p) = 1$, $\nabla_g v(p) = 0$, $R_{ij}(p) =0$, and $R_g(p)=0$, it follows by formula \eqref{Bakryv} that $R^{m}_{\phi}(p) = - 2m\Delta_g v$, and the Taylor expansion of $R^{m}_{\phi}$ and $v^m$ takes the form of
\begin{equation}\label{Taylor R}
R^{m}_{\phi}(x) = - 2m\Delta_g v + (R^{m}_{\phi})_{i} x_{i} + \dfrac{(R^{m}_{\phi})_{ij}}{2} x_{i} x_{j} + O(|x|^3), 
\end{equation}
\begin{equation}\label{Taylor v}
v^m(x) = 1  + \frac{(v^{m})_{ij}}{2}x_{i}x_{j} + \frac{(v^{m})_{ijl}}{6}x_{i}x_{j}x_{l} + \frac{(v^{m})_{ijlk}}{24}x_{i}x_{j}x_{l}x_{k} + O(|x|^5),
\end{equation} we recall that the coefficients were computed in $p$. Since $R_{ij}(p) = 0$, formula \eqref{dV normal} takes the form 
\begin{equation}\label{dV normal 2}
\begin{array}{ll}
dV_{g}  = &(1 - \frac{1}{12}R_{ij,k}x_{i} x_{j} x_{k} - ( \frac{1}{40}R_{ij,kl} + \frac{1}{180}R_{pijr} R_{pklr} )x_{i} x_{j} x_{k} x_{l} + O(|x|^5) )dx.
\end{array} %
\end{equation}

Using equalities \eqref{Taylor R}, \eqref{Taylor v}, and \eqref{dV normal 2} and the symmetries in the ball, we have \footnotesize
\begin{equation}\label{R2 Case}
\begin{array}{rl}
\displaystyle  \int_{B_{\epsilon} } R^{m}_{\phi} f_{\tau}^2 v^m dV_{g}   = & \displaystyle  - 2m\Delta_g v \int_{B_{\epsilon}} \varphi_{0,\tau}^2  dx +  \frac{1}{2}( (R^{m}_{\phi})_{ij}  - 2m (v^m)_{ij} \Delta_g v) \int_{B_{\epsilon}} \varphi_{0,\tau}^2 x_{i}x_{j} dx \vspace{0.2cm} \\
& \displaystyle +  \int_{B_{\epsilon}} \varphi_{0,\tau}^2 O(|x|^{4})dx\\
=& \displaystyle   - 2m\Delta_g v \int_{B_{\epsilon}} \varphi_{0,\tau}^2  dx +  \frac{1}{2n}( \Delta_g  R^{m}_{\phi}  -2m  (\Delta_g v^m) \Delta_g v) \int_{B_{\epsilon}} \varphi_{0,\tau}^2 |x|^{2}dx \vspace{0.2cm} \\
& \displaystyle +  \int_{B_{\epsilon}} \varphi_{0,\tau}^2 O(|x|^{4})dx.\\
\end{array}
\end{equation}

\normalsize

Then, \begin{equation}\label{R3 Case}
\begin{array}{ll}
\displaystyle \int_{B_{\epsilon}} \varphi_{0,\tau}^2 dx & = \displaystyle\tau^{-\frac{n(m+n-2)}{2(m+n)}} \int_{B_{\epsilon}} (1 + \frac{c(m,n)}{\tau}|x|^2)^{-(m+n-2)}dx \vspace{0.2cm}\\
& =\displaystyle \frac{\tau^{-\frac{n(m+n-2)}{2(m+n)} + \frac{n}{2}}}{c(m,n)^{\frac{n}{2}}} \int_{B_{\frac{\epsilon\sqrt{c(m,n)}}{\sqrt{\tau}}}} (1 + |y|^2)^{-(m+n-2)} dy  \vspace{0.2cm} \\
& = \displaystyle \frac{\tau^{\frac{n}{m+n}}}{c(m,n)^{\frac{n}{2}}} (I_1 - \int_{ \R^{n} \setminus B_{\frac{\epsilon\sqrt{c(m,n)}}{\sqrt{\tau}}}} (1 + |y|^2)^{-(m+n-2)} dy) \vspace{0.2cm} \\
& =  \displaystyle \frac{\tau^{\frac{n}{m+n}}}{c(m,n)^{\frac{n}{2}}} I_1  + O(\epsilon^{4-2m-n}\tau^{\frac{n}{m+n} + m + \frac{n-4}{2}}) \\
\end{array}
\end{equation} and similarly,
\begin{equation}\label{R1 Case orden 2}
\begin{array}{ll}
\displaystyle \int_{B_{\epsilon}} \varphi_{0,\tau}^2 |x|^2 dx & = \displaystyle \tau^{-\frac{n(m+n-2)}{2(m+n)}} \int_{B_{\epsilon}} (1 + \frac{c(m,n)}{\tau}|x|^2)^{-(m+n-2)} |x|^2dx \vspace{0.2cm}\\
& = \frac{\tau^{\frac{n}{m+n} + 1}}{c(m,n)^{\frac{n+2}{2}}} I_2  + O(\epsilon^{6-2m-n}\tau^{\frac{n}{m+n} + m + \frac{n-4}{2}}). \\
\end{array}
\end{equation}

Now, taking $q < \min\{2m+n-6, 1\}$ and $0 < \epsilon < 1$, then for $|x| \leq \epsilon$ we obtain $|x|^{q} > |x|^{2}$ and 
\begin{equation}\label{R4 Case}
\begin{array}{ll}
\displaystyle \int_{B_{\epsilon}} \varphi_{0,\tau}^2 |x|^{4}dx & \leq \displaystyle \int_{B_{\epsilon}} \varphi_{0,\tau}^2 |x|^{2 + q}dx  \vspace{0.2cm}\\
& \leq \displaystyle \tau^{-\frac{n(m+n-2)}{2(m+n)}} \int_{B_{\epsilon}} \dfrac{|x|^{2+q} }{(1 + \frac{c(m,n)}{\tau}|x|^{2})^{m+n-2}}dx  \vspace{0.2cm}\\
& \leq  \displaystyle C\tau^{ \frac{n}{m+n} +1 + \frac{q}{2} } \int_{B_{\frac{\epsilon\sqrt{c(m,n)}}{\sqrt{\tau}}}} \dfrac{|y|^{2+q}}{(1 + |y|^2)^{(m+n-2)}}dy  \vspace{0.2cm} \\
& \leq C\tau^{ \frac{n}{m+n} +1 + \frac{q}{2}}(C + C\displaystyle \int_{1}^{\infty} r^{6-2m-n+q -1}dr)  \vspace{0.2cm}\\
& \leq C\tau^{ \frac{n}{m+n} +1 + \frac{q}{2}}. 
\end{array}
\end{equation}

Equality \eqref{Laplacian vk}, together with estimates \eqref{R1 Case}, \eqref{R2 Case}, \eqref{R3 Case}, and \eqref{R4 Case}, leads to equality \eqref{R5 Case}.
\end{proof}

The following lemma estimates the integral with the gradient term in \eqref{w8}

\begin{lema}\label{Estimate gradiente}
Let $(M^n, g, v^{m} dV_g)$ be a compact smooth metric measure space such that for $p \in M$, we have $v(p) = 1$, $\nabla_{g} v(p) = 0$, and $Ric_{g}(p) = 0$. If $f_{\tau}$ is defined by \eqref{f tau}, then
\begin{equation}\label{gradiente Case 6}
\begin{array}{rl}
\displaystyle \int_{B_{2\epsilon}} |\nabla_g f_{\tau}|_g^{2} v^{m} dV_{g}  = & \displaystyle  \int_{B_{\epsilon}} |\nabla \varphi_{0, \tau}|^{2} dx + \tau^{\frac{n}{m+n}} \frac{ m(m+n-2)^2}{2 n \, c(m,n)^{\frac{n}{2}}}   \Delta_{g}v I_3  \vspace{0.2cm} \\ 
& - \dfrac{\tau^{\frac{n}{m+n} + 1}(m+n-2)^2  }{2c(m,n)^{\frac{n+2}{2}} n(n+2)} I_4 A_1  + O( \tau^{\frac{n}{m+n} + m + \frac{n-4}{2}} \epsilon^{2-2m-n}) \vspace{0.2cm} \\
& + O(\tau^{\frac{n}{m+n} + 1  + \frac{q}{2}}),\\  
\end{array}
\end{equation} where $|\cdot|$ is the Euclidean norm,
\begin{equation}\label{I2 Case}
\begin{array}{ll}
I_3 = \displaystyle \int_{\R^{n}} |y|^4(1 + |y|^2)^{-(m+n)} dy,
\end{array}
\end{equation}
\begin{equation}\label{I7 Case}
\begin{array}{ll}
I_4 = \displaystyle \int_{\R^n } |y|^{6} (1+ |y|^2)^{-(m+n)}  dy, 
\end{array}
\end{equation} and
\begin{equation}\label{A1 2}
\begin{array}{ll}
A_1 = & \displaystyle  - \frac{m(m-1)}{2} |\text{Hess} \, v|_{g}^2 - \frac{m(m-1)}{4}  (\Delta_{g} v)^2 - \frac{m}{4} \Delta_{g}^2 v + \frac{\Delta R_{g}}{10} \displaystyle   + \frac{|W|_g^2}{60}. 
\end{array}
\end{equation}

\end{lema}

\begin{proof} To estimate the gradient term in $A_{\epsilon}$, note that, in $g$-normal coordinates, the term $|\nabla f_{\tau}|_{g}^{2}$ in $A_{\epsilon}$ satisfies the following inequality 
\begin{equation}\label{gradiente en el anillo}
|\nabla f_{\tau}|_{g}^{2} \leq C|\nabla f_{\tau}|^{2} \leq C (\eta^{2}|\nabla \varphi_{0, \tau}|^{2} + |\nabla \eta|^{2} \varphi_{0, \tau}^{2}) \leq C (|\nabla \varphi_{0, \tau}|^{2} + \epsilon^{-2} \varphi_{0, \tau}^{2}).
\end{equation}

Additionally, we obtain
\begin{equation}\label{gradiente Case 1}
\begin{array}{ll}
\displaystyle \int_{A_{\epsilon}} \epsilon^{-2} \varphi_{0, \tau}^{2} dV_{g} & \leq \displaystyle (1 + \epsilon C) \epsilon^{-2} \int_{A_{\epsilon}} \varphi_{0, \tau}^{2} dx  \vspace{0.2cm}\\ 
 & \leq (1+ \epsilon C) \epsilon^{2-n -2m} \tau^{\frac{n}{m+n} + m +\frac{n -4}{2} }
\end{array}
\end{equation} and
\begin{equation}\label{gradiente Case 2}
\begin{array}{ll}
\displaystyle \int_{A_{\epsilon}} |\nabla \varphi_{0, \tau}|^{2}dV_{g}& \leq \displaystyle (1 + \epsilon C) \int_{A_{\epsilon}} |\nabla \varphi_{0, \tau}|^{2} dx \vspace{0.2cm}\\ 
 &\leq  (1+ \epsilon C) \epsilon^{2-n -2m} \tau^{\frac{n}{m+n} + m +\frac{n -4}{2} }.
\end{array}
\end{equation}

Then,
\begin{equation}\label{gradiente Case 3}
\begin{array}{ll}
\displaystyle \int_{A_{\epsilon}}  |\nabla f_{\tau}|_{g}^{2}dxdt & = O(\epsilon^{2-n -2m} \tau^{\frac{n}{m+n} + m +\frac{n -4}{2} }).
\end{array}
\end{equation}

Next, we estimate the integral with the gradient term in $B_{\epsilon}$ in equality \eqref{w8}. Then, the symmetries of the ball, equality \eqref{g -1}, equality \eqref{dV normal 2}, Taylor expansion \eqref{Taylor v}, Taylor expansion \eqref{Taylor R}, and $\nabla_{g} v^{m}(p) = 0$ imply that \small
\begin{equation}\label{gradiente Case 4}
\begin{array}{ll}
\displaystyle \int_{B_{\epsilon}} |\nabla f_{\tau}|_{g}^{2} v^{m} dV_{g}   = & \displaystyle  \int_{B_{\epsilon}} |\nabla \varphi_{0, \tau}|^{2} dx + \frac{\Delta_g v^m}{2n} \int_{B_{\epsilon}}  |\nabla \varphi_{0, \tau}|^{2} |x|^2 dx \vspace{0.2cm} \\
& \displaystyle -  \frac{1}{3} R_{iklj} \int_{B_{\epsilon}}  (\partial_{i} \varphi_{0, \tau}) (\partial_{j} \varphi_{0, \tau})   x_{k} x_{l} dx \vspace{0.2cm} \\
& \displaystyle - \frac{1}{6} R_{iklj}(v^m)_{su} \int_{B_{\epsilon}}  (\partial_{i} \varphi_{0, \tau}) (\partial_{j} \varphi_{0, \tau})  x_{k} x_{l} x_{s} x_{u} dx \vspace{0.2cm} \\
& \displaystyle -  (  \frac{1}{20} R_{iklj,su} - \frac{3}{45} R_{iklr} R_{rsuj} )  \int_{B_{\epsilon}}  (\partial_{i} \varphi_{0, \tau}) (\partial_{j} \varphi_{0, \tau})  x_{k} x_{l} x_{s} x_{u} dx \vspace{0.2cm} \\
& \displaystyle + \frac{1}{24} (v^m)_{ijkl} \int_{B_{\epsilon}} |\nabla \varphi_{0, \tau}|^{2}x_{i} x_{j} x_{k} x_{l} dx \vspace{0.2cm} \\
& \displaystyle - (\frac{1}{40} R_{ij,kl}  + \frac{1}{180} R_{rijs} R_{rkls} ) \int_{B_{\epsilon}} |\nabla \varphi_{0, \tau}|^{2}x_{i} x_{j} x_{k} x_{l} dx \vspace{0.2cm} \\
& + \displaystyle \int_{B_{\epsilon}}  |\nabla \varphi_{0, \tau}|^{2} O(|x|^6) dx.\\
\end{array}
\end{equation} \normalsize

For the second integral in \eqref{gradiente Case 4}, we have
\begin{equation}\label{gradiente Case 5}
\begin{array}{ll}
\displaystyle \int_{B_{\epsilon}}  |\nabla \varphi_{0, \tau}|^{2} |x|^2  dx
& =\tau^{\frac{n}{m+n}} \frac{ (m+n-2)^2}{c(m,n)^{\frac{n}{2}}} \displaystyle \int_{B_{\frac{\epsilon\sqrt{c(m,n)}}{\sqrt{\tau}}}} |y|^4(1 + |y|^2)^{-(m+n)}dy\\
&=  \tau^{\frac{n}{m+n}} \frac{ (m+n-2)^2}{c(m,n)^{\frac{n}{2}}}  I_3 + O(\epsilon^{4-2m-n}\tau^{\frac{n}{m+n} + m + \frac{n-4}{2}}).
\end{array}
\end{equation}

Using the symmetries of the ball and of the Riemann curvature tensor, we obtain \small
\begin{equation}\label{gradiente Case 7}
\begin{array}{rl}
\dfrac{\int_{B_{\epsilon}}  (\partial_{i} \varphi_{0, \tau}) (\partial_{j} \varphi_{0, \tau})  R_{iklj}(0) x_{k} x_{l} dx}{(m+n-2)^2 c(m,n)^2\tau^{- \frac{n(m+n-2)}{2(m+n)} -2}}  
= &  \displaystyle \int_{B_{\epsilon}}  \frac{R_{iklj} x_{i} x_{j} x_{k} x_{l} }{(1 + \frac{c(m,n)}{\tau}|x|^2)^{m+n} } dx \\
 = & \displaystyle \int_{B_{\epsilon}} \dfrac{\sum\limits_{i=1}^{n}R_{iiii} x_i^4 + \sum\limits_{i \neq j}^{n}(R_{iijj} + R_{ijij} + R_{ijji} )  x_{i}^2 x_{j}^2 }{(1  + \frac{c(m,n)}{\tau} |x|^{2})^{m+n}} dx\\
= & 0.
\end{array}
\end{equation} \normalsize

Again, the symmetries of the ball and of the Riemann curvature tensor yield \small
\begin{equation}\label{gradiente Case 1 orden 2}
\begin{array}{l}
R_{iklj}(0)(v^m)_{su}(0)\displaystyle \int_{B_{\epsilon}}  (\partial_{i} \varphi_{0, \tau}) (\partial_{j} \varphi_{0, \tau})  x_{k} x_{l} x_{s} x_{u} dx = 0
\end{array}
\end{equation} \normalsize and \small
\begin{equation}\label{gradiente Case 2 orden 2}
\begin{array}{l}
(\frac{1}{20} R_{iklj,su}(0) - \frac{3}{45} R_{iklr}(0) R_{rsuj}(0) ) \displaystyle  \int_{B_{\epsilon}}  (\partial_{i} \varphi_{0, \tau}) (\partial_{j} \varphi_{0, \tau})  x_{k} x_{l} x_{s} x_{u} dx  = 0.
\end{array}
\end{equation}

\normalsize

In order to compute the sixth integral on the right-hand side of  \eqref{gradiente Case 4}, we obtain\small
\begin{equation}\label{gradiente Case 3 orden 2}
\begin{array}{rl}
\displaystyle  (v^m)_{ijkl}(0) \int_{B_{\epsilon}} |\nabla \varphi_{0, \tau}|^{2}x_{i} x_{j} x_{k} x_{l} dx \vspace{0.2cm} = & \displaystyle (v^m)_{ijkl} \int_{B_{\epsilon}} \frac{|x|^2 x_{i} x_{j} x_{k} x_{l}}{(1+ \frac{c(m,n)}{\tau}|x|^2)^{m+n}} dx. 
\end{array}
\end{equation} \normalsize 

The symmetries of the ball, Lemma \ref{integral con simetria} in the appendix, and $(v^m)_{ijji} = (v^m)_{jiji}$ imply \footnotesize
\begin{equation}
\begin{array}{rl}
\displaystyle \int_{B_{\epsilon}} \frac{(v^m)_{ijkl}(0) x_{i} x_{j} x_{k} x_{l}}{|x|^{-2}(1+ \frac{c(m,n)}{\tau}|x|^2)^{m+n}} dx= &\displaystyle \int_{B_{\epsilon}} \dfrac{\sum \limits_{i = 1}^{n} (v^m)_{iiii} x_i^{4} + \sum \limits_{i \neq j}^{n}  ((v^m)_{iijj} +  (v^m)_{ijij} + (v^m)_{ijji}) x_i^{2} x_j^{2}}{(1+ \frac{c(m,n)}{\tau}|x|^2)^{m+n}|x|^{-2} } dx \vspace{0.2cm}\\
= & \displaystyle \frac{1}{3} ((v^m)_{iijj} +   (v^m)_{ijij} + (v^m)_{ijji} ) \int_{B_{\epsilon}} \frac{ |x|^{2}x_i^{4} }{(1+ \frac{c(m,n)}{\tau}|x|^2)^{m+n} } dx \vspace{0.2cm}\\
=  & \displaystyle \frac{((v^m)_{iijj} +   (v^m)_{ijij} + (v^m)_{ijji} ) }{n(n+2)} \int_{B_{\epsilon}} \frac{ |x|^{6} }{(1+ \frac{c(m,n)}{\tau}|x|^2)^{m+n} } dx \vspace{0.2cm}\\
& \displaystyle \frac{((v^m)_{iijj} +   (v^m)_{ijij} + (v^m)_{jiji} ) }{n(n+2)} \int_{B_{\epsilon}} \frac{ |x|^{6} }{(1+ \frac{c(m,n)}{\tau}|x|^2)^{m+n} } dx. 
\end{array}
\end{equation} \normalsize

Formula \eqref{doble laplaciano vm} for $k= m$ implies
\begin{equation}\label{gradiente Case 6 orden 2}
\begin{array}{rl}
\displaystyle  (v^m)_{ijkl}(0) \int_{B_{\epsilon}} |\nabla \varphi_{0, \tau}|^{2}x_{i} x_{j} x_{k} x_{l} dx \vspace{0.2cm} = &  \displaystyle \frac{3\tau^{\frac{n}{m+n} + 1}(m+n-2)^2 \Delta^2 v^m  }{n(n+2)c(m,n)^{\frac{n+2}{2}} } I_4 \\ & + O(\epsilon^{6-2m-n} \tau^{\frac{n}{m+n} +m+ \frac{n-4}{2}}).
\end{array}
\end{equation}

Using a similar argument to obtain equality \eqref{gradiente Case 6 orden 2} and the contraction of Bianchi's identity $R_{g}, _{\, i} = 2 R^{j}_{\, \, i,j}$, we have
\begin{equation}\label{gradiente Case 7 orden 2}
\begin{array}{rl}
\displaystyle  R_{ij,kl} (0)  \int_{B_{\epsilon}} |\nabla \varphi_{0, \tau}|^{2}x_{i} x_{j} x_{k} x_{l} dx \vspace{0.2cm} 
= & \displaystyle \frac{\tau^{\frac{n}{m+n} + 1} (R_{ii,jj} +R_{ij,ij} +R_{ij,ji} )  }{n(n+2) c(m,n)^{\frac{n+2}{2}} (m+n-2)^{-2}} I_4 \vspace{0.2cm} \\
& + O(\epsilon^{6-2m-n} \tau^{\frac{n}{m+n} + m+ \frac{n-4}{2}}) \vspace{0.2cm}\\
= &\displaystyle \frac{2\tau^{\frac{n}{m+n} + 1}(m+n-2)^2 \Delta R_{g}  }{n(n+2) c(m,n)^{\frac{n+2}{2}} } I_4 \vspace{0.2cm}\\
& + O(\epsilon^{6-2m-n} \tau^{\frac{n}{m+n} + m+ \frac{n-4}{2}}).  \\
\end{array}
\end{equation}

The identities $R_{ijkl}R^{ijkl} = 2R_{ijkl}R^{ilkj}$ and $Ric_g(p)=0$ yield
\begin{equation}\label{gradiente Case 8 orden 2}
\begin{array}{rl}
\displaystyle R_{rijs}(0) R_{rkls}(0) \int_{B_{\epsilon}} |\nabla \varphi_{0, \tau}|^{2}x_{i} x_{j} x_{k} x_{l} dx 
=  & \displaystyle \frac{\tau^{\frac{n}{m+n} + 1} (R_{rs} R_{rs} + R_{irsj} R_{irsj} + R_{irsj} R_{isrj})}{n(n+2)c(m,n)^{\frac{n+2}{2}} (m+n-2)^{-2}} I_4  \vspace{0.2cm}  \\ 
& + O(\epsilon^{6-2m-n} \tau^{\frac{n}{m+n} + m+ \frac{n-4}{2}})  \vspace{0.2cm} \\
= & \displaystyle \frac{3\tau^{\frac{n}{m+n} + 1} (m+n-2)^{2} R_{irsj} R_{irsj}}{2n(n+2)c(m,n)^{\frac{n+2}{2}} } I_4  \vspace{0.2cm}\\
& + O(\epsilon^{6-2m-n} \tau^{\frac{n}{m+n} + m+ \frac{n-4}{2}}).
\end{array}
\end{equation}

For the last integral on the right-hand side of \eqref{gradiente Case 4}, 
taking $q < \min \{ 2m+ n-6, 1\}$ and $\epsilon < 1$ as in \eqref{R4 Case}, we obtain
\begin{equation}\label{gradiente Case 8}
\begin{array}{ll}
\displaystyle \int_{B_{\epsilon}}  |\nabla \varphi_{0, \tau}|^{2} |x|^6 dx & \leq \displaystyle \int_{B_{\epsilon}}  |\nabla \varphi_{0, \tau}|^{2} |x|^{4+ q} dx 
\leq  C\tau^{\frac{n}{m+n} + 1+ \frac{q}{2}}.
\end{array}
\end{equation}

Equalities \eqref{Laplacian vk} for $k=m$, \eqref{gradiente Case 3}, \eqref{gradiente Case 4}, \eqref{gradiente Case 5}, \eqref{gradiente Case 7}, \eqref{gradiente Case 1 orden 2}, \eqref{gradiente Case 2 orden 2},
\eqref{gradiente Case 6 orden 2}, \eqref{gradiente Case 7 orden 2}, and \eqref{gradiente Case 8 orden 2} and inequality \eqref{gradiente Case 8} lead to \eqref{gradiente Case 6} where
\begin{equation}\label{A1}
\begin{array}{l}
A_1 =  \displaystyle  - \frac{\Delta^2 v^m}{4} + \frac{\Delta R_{g}}{10}   + \frac{R_{iklj} R_{iklj}}{60}.
\end{array}
\end{equation}

By equality \eqref{doble laplaciano vm} for $k=m$, we obtain
\begin{equation}\label{A1.1}
\begin{array}{l}
A_1 =  \displaystyle   - \frac{m(m-1) |\text{Hess} \, v|_{g}^2}{2} - \frac{m(m-1) (\Delta_{g} v)^2}{4}   - \frac{m \Delta_{g}^2 v }{4}   + \frac{\Delta R_{g}}{10}   + \frac{R_{iklj} R_{iklj}}{60}.
\end{array}
\end{equation}

Finally, by hypothesis $Ric_g(p) = 0$, and formulas \eqref{T} and \eqref{W}, we conclude that $A_1$ takes the form of \eqref{A1 2}.
\end{proof}

Next, we analyze the last integral on the right-hand side of \eqref{w8} 
\begin{lema}\label{Estimate f vm-1}
Let $(M^n, g, v^{m} dV_g)$ be a compact smooth metric measure space such that for $p \in M$ we have $v(p) = 1$, $\nabla_{g} v(p) = 0$, $Ric_{g}(p) = 0$. If $f_{\tau}$ is defined by \eqref{f tau}, then
\begin{equation}\label{last integral Case 5}
\begin{array}{ll}
\displaystyle \int_{B_{2\epsilon}} f_{\tau}^{\frac{2(m+n-1)}{m+n-2}} v^{m-1} dV_{g}  = & \displaystyle \int_{B_{\epsilon}} \varphi_{0, \tau}^{\frac{2(m+n-1)}{m+n-2}} dx  +  
\dfrac{\tau^{\frac{n}{2(m+n)} + 1} (m-1)\Delta_{g}v}{2nc(m,n)^{\frac{n+2}{2}}} I_5 \vspace{0.2cm} \\
& - \dfrac{\tau^{\frac{n}{2(m+n)} + 2}  I_6 }{2n(n+2) c(m,n)^{\frac{n+4}{2}}} A_2 \vspace{0.2cm}\\
& +  O(\tau^{\frac{n}{2(m+n)} + m + \frac{n-2}{2}} \epsilon^{4-2m-n}) + O(\tau^{\frac{n}{2(m+n)} +2 + \frac{q}{2}})
\end{array}
\end{equation} where
\begin{equation}\label{I3 Case}
I_5 = \int_{\R^n} |y|^2(1  + |y|^2)^{-(m+n-1)} dy,
\end{equation}
\begin{equation}\label{I8 Case}
I_6 = \displaystyle \int_{\R^n} |y|^{4}(1 +  |y|^2)^{-(m+n-1)}  dy,
\end{equation} and \small
\begin{equation}\label{A2}
\begin{array}{ll}
A_2  = &\displaystyle  - \frac{(m-1)(m-2)}{2} |\text{Hess} \, v|_{g}^2 -  \frac{(m-1)(m-2)}{4} (\Delta_{g} v)^2 - \frac{(m-1)}{4} \Delta_{g}^2 v + \frac{\Delta R_{g}}{10} + \frac{|W|_g^2}{60}.
\end{array}
\end{equation} \normalsize
\end{lema}

\begin{proof} In the region $A_{\epsilon}$, we have
\begin{equation}\label{last integral Case 1}
\begin{array}{ll}
\displaystyle \int_{A_{\epsilon}} f_{\tau}^{\frac{2(m+n-1)}{m+n-2}} v^{m-1} dV_{g}  & \leq C(1+ \epsilon C) \displaystyle\int_{A_{\epsilon}} \varphi_{0, \tau}^{\frac{2(m+n-1)}{m+n-2}} dx \vspace{0.2cm}\\
 & \leq C(1+ \epsilon C) \tau^{\frac{n}{2(m+n)} + m + \frac{n-2}{2}} \epsilon^{2-2m-n}.\\
\end{array}
\end{equation}

In order to estimate the integral on the left-hand size of \eqref{last integral Case 5} in the region $B_{\epsilon}$, we use equality \eqref{dV normal}, Taylor expansion around $p$ for $v^{m-1}$, Taylor expansion \eqref{Taylor R}, $\nabla v(p) =0$, equality \eqref{Laplacian vk} for $k = m-1$, and the symmetries in the ball to obtain \small
\begin{equation}\label{last integral Case 2}
\begin{array}{rl}
\displaystyle \int_{B_{\epsilon}} f_{\tau}^{\frac{2(m+n-1)}{m+n-2}} v^{m-1} dV_{g} = & \displaystyle \int_{B_{\epsilon}} \varphi_{0, \tau}^{\frac{2(m+n-1)}{m+n-2}} dx + 
\frac{\Delta_{g} v^{m-1}}{2n} \int_{B_{\epsilon}}  \varphi_{0, \tau}^{\frac{2(m+n-1)}{m+n-2}} |x|^2 dx \vspace{0.2cm}\\ 
& \displaystyle + \frac{1}{24} (v^{m-1})_{ijkl} \int_{B_{\epsilon}}  \varphi_{0, \tau}^{\frac{2(m+n-1)}{m+n-2}} x_{i} x_{j} x_{k} x_{l} dx  \vspace{0.2cm}\\
&\displaystyle - (\frac{1}{40} R_{ij,kl} + \frac{1}{180} R_{rijs} R_{rkls} ) \int_{B_{\epsilon}}  \varphi_{0, \tau}^{\frac{2(m+n-1)}{m+n-2}} x_{i} x_{j} x_{k} x_{l} dx  \vspace{0.2cm}\\
& \displaystyle + \int_{B_{\epsilon}}  \varphi_{0, \tau}^{\frac{2(m+n-1)}{m+n-2}} O(|x|^6) dx.
\end{array}
\end{equation} \normalsize

Now, the second term on the right-hand side of \eqref{last integral Case 2} takes the form
\begin{equation}\label{last integral Case 3}
\begin{array}{ll}
\displaystyle \int_{B_{\epsilon}}  \varphi_{0, \tau}^{\frac{2(m+n-1)}{m+n-2}} |x|^2 dx  
& = \dfrac{\tau^{\frac{n}{2(m+n)} + 1}}{c(m,n)^{\frac{n+2}{2}}} I_5 + O(\tau^{\frac{n}{2(m+n)} + m + \frac{n-2}{2}} \epsilon^{4-2m-n}).\\
\end{array}
\end{equation}

For the third term on the right-hand side of \eqref{last integral Case 2}, a similar argument as in \eqref{gradiente Case 3 orden 2} to \eqref{gradiente Case 6 orden 2} using formula \eqref{doble laplaciano vm} for $k = m-1$ yields
\begin{equation}\label{last integral Case 5 order 2}
\begin{array}{rl}
(v^{m-1})_{ijkl}(0) \displaystyle \int_{B_{\epsilon}}  \varphi_{0, \tau}^{\frac{2(m+n-1)}{m+n-2}} x_{i} x_{j} x_{k} x_{l} dx = & \dfrac{3\tau^{\frac{n}{2(m+n)} + 2}  \Delta^2 v^{m-1} }{n(n+2) c(m,n)^{\frac{n+4}{2}}} I_6 \vspace{0,2cm}\\
& + O(\epsilon^{6-2m-n} \tau^{\frac{n}{2(m+n)} + m + \frac{n-2}{2}}). 
\end{array}
\end{equation}

Using the contraction of Bianchi's identity $R_{g},i = 2 R^{j}_{\, \, i,j}$, we obtain
\begin{equation}\label{last integral Case 6 order 2}
\begin{array}{rl}
R_{ij,kl} (0)\displaystyle \int_{B_{\epsilon}}  \varphi_{0, \tau}^{\frac{2(m+n-1)}{m+n-2}} x_{i} x_{j} x_{k} x_{l} dx 
= & \dfrac{2\tau^{\frac{n}{2(m+n)} + 2} \Delta R_{g} }{n(n+2) c(m,n)^{\frac{n+4}{2}}} I_6 \vspace{0,2cm} \\
& + O(\epsilon^{6-2m-n} \tau^{\frac{n}{2(m+n)} + m + \frac{n-2}{2}}), \\
\end{array}
\end{equation}

The identity $R_{ijkl}R^{ijkl} = \frac{1}{2}R_{ijkl}R^{ilkj}$ and $Ric_g(p)=0$ imply
\small
\begin{equation}\label{last integral Case 7 order 2}
\begin{array}{ll}
R_{rijs} (0) R_{rkls} (0) \displaystyle \int_{B_{\epsilon}}  \varphi_{0, \tau}^{\frac{2(m+n-1)}{m+n-2}} x_{i} x_{j} x_{k} x_{l} dx  = & \dfrac{3\tau^{\frac{n}{2(m+n)} + 2}R_{rijs}  R_{rijs}}{2n(n+2) c(m,n)^{\frac{n+4}{2}}} I_6 \vspace{0,2cm}\\
&  +  O(\epsilon^{6-2m-n} \tau^{\frac{n}{2(m+n)} + m + \frac{n-2}{2}}). \\
\end{array}
\end{equation} \normalsize

On the other hand, since $\epsilon <1 $ and we choose $0 < q < \min\{2m +n -6, 1\}$, 
the last term on the right-hand side of \eqref{last integral Case 2} is estimated as follows
\begin{equation}\label{last integral Case 4}
\begin{array}{ll}
\displaystyle \int_{B_{\epsilon}}  \varphi_{0, \tau}^{\frac{2(m+n-1)}{m+n-2}} |x|^6 dx  & \leq \tau^{-\frac{n(m+n-1)}{2(m+n)}} \displaystyle \int_{B_{\epsilon}} |x|^{4 + q}(1  + \frac{c(m,n)}{\tau}|x|^2)^{-(m+n-1)} dx \\ 
& \leq C \tau^{\frac{n}{2(m+n)} + 2 +\frac{q}{2}}.
\end{array}
\end{equation}

Equality \eqref{Laplacian vk} and estimates \eqref{last integral Case 1} to \eqref{last integral Case 4} allow us to conclude formula \eqref{last integral Case 5}, where
\begin{equation}\label{A2*}
\begin{array}{ll}
A_2 = &\displaystyle  - \frac{\Delta_g^2 v^{m-1}}{4} + \frac{\Delta_g R_{g}}{10}  + \frac{R_{iklj} R_{iklj}}{60}.
\end{array}
\end{equation}

By hypothesis $Ric_g(p)=0$, equality \eqref{W}, and equality \eqref{doble laplaciano vm} for $k=m-1$, it follows that $A_2$ satisfies equality \eqref{A2}.
\end{proof}

Now, we analyze the behavior of $V_{\tau}$, $V_{\tau}^{-\frac{m+n-2}{m+n}}$, and $V_{\tau}^{-\frac{m+n-1}{m+n}}$ when $\tau$ is near zero.

\begin{lema}\label{Estimate V}
Let $(M^n, g, v^{m} dV_g)$ be a compact smooth metric measure space such that for $p \in M$ we have $v(p) = 1$, $\nabla_{g} v(p) = 0$, and $Ric_{g}(p) = 0$. If $V$, $f_{\tau}$, and  $V_{\tau}$ are defined by \eqref{Definition V}, \eqref{f tau}, and \eqref{definition Vtau}, respectively; then, 
\begin{equation}\label{V - Vt tilde 7}
\begin{array}{ll}
V - V_{\tau} = &  - \dfrac{\tau m\Delta_{g}v}{2n c(m,n)^{\frac{n+2}{2}}}  I_7 + \dfrac{\tau^{2}  I_3 A_1}{2n(n+2) c(m,n)^{\frac{n+4}{2}}}+  O(\tau^{ m + \frac{n}{2}} \epsilon^{-2m-n}) + O(\tau^{2 + \frac{q}{2}}),
\end{array}
\end{equation} where $A_1$ is defined by \eqref{A1 2} and
\begin{equation}\label{I4 Case}
I_7 = \int_{\R^n} |y|^2 (1 + |y|^2)^{-(m+n)} dy.
\end{equation}

Additionally, we obtain the following estimates
\begin{equation}\label{V - Vt potencia tilde 1}
\begin{array}{ll}
V_{\tau}^{-\frac{m+n-2}{m+n}}   = & V^{-\frac{m+n-2}{m+n}}  - \dfrac{\tau m(m+n-2) I_7 \Delta_g v }{2n(m+n)c(m,n)^{\frac{n+2}{2}}V^{\frac{2m+2n-2}{m+n}}} \vspace{0.2cm}\\
& + \dfrac{\tau^{2}  (m+n-2) I_3 A_1}{2n(n+2)(m+n) c(m,n)^{\frac{n+4}{2}} V^{\frac{2m+2n-2}{m+n}}} \vspace{0.2cm} \\
& + \dfrac{\tau^{2}  m^2(m+n-2)(m+n-1)   (I_7)^2 (\Delta_{g}v)^2  }{4 n^2 (m+n)^2 c(m,n)^{n+2} V^{\frac{3m+3n-2}{m+n}}} \vspace{0.2cm} \\
& +  O(\tau^{ m + \frac{n}{2}} \epsilon^{-2m-n}) + O(\tau^{2 + \frac{q}{2}})
\end{array}
\end{equation} and 
\begin{equation}\label{V - Vt potencia tilde 2}
\begin{array}{ll}
V_{\tau}^{-\frac{m+n-1}{m+n}}  & = V^{-\frac{m+n-1}{m+n}}   - \dfrac{\tau m (m+n-1)  \Delta_g v }{2n(m+n)c(m,n)^{\frac{n+2}{2}}V^{\frac{2m+2n-1}{m+n}}} I_7 \vspace{0.2cm} \\
& + \dfrac{\tau^{2}  (m+n-1) I_3 A_1}{2n(n+2)(m+n) c(m,n)^{\frac{n+4}{2}} V^{\frac{2m+2n-1}{m+n}}} \vspace{0.2cm}\\
& + \dfrac{\tau^{2}  m^2(m+n-1)(2m+2n-1) (\Delta_{g}v)^2 (I_7)^2 }{8 n^2 (m+n)^2 c(m,n)^{n+2} V^{\frac{3m+3n-1}{m+n}}} \vspace{0.2cm}\\
& +  O(\tau^{ m + \frac{n}{2}} \epsilon^{-2m-n}) + O(\tau^{2 + \frac{q}{2}}).
\end{array}
\end{equation}
\end{lema}

\begin{proof} By the definitions of $V$ and $V_{\tau}$, we obtain
\begin{equation}\label{V - Vt tilde 1}
\begin{array}{ll}
V - V_{\tau}  = & \displaystyle \int_{\R^n \setminus B_{2\epsilon}} \varphi_{0, \tau}^{\frac{2(m+n)}{m+n-2}}dx + (\int_{A_{\epsilon}} \varphi_{0, \tau}^{\frac{2(m+n)}{m+n-2}} dx - \int_{ A_{\epsilon}} f_{0, \tau}^{\frac{2(m+n)}{m+n-2}}v^{m}dV_{g}) \vspace{0.2cm}\\
& \displaystyle + (\int_{B_{\epsilon}} \varphi_{0, \tau}^{\frac{2(m+n)}{m+n-2}} dx - \int_{B_{\epsilon}} \varphi_{0, \tau}^{\frac{2(m+n)}{m+n-2}}v^{m}dV_{g}).
\end{array}
\end{equation} 

For the first integral on the right-hand side of \eqref{V - Vt tilde 1} we have
\begin{equation}\label{V - Vt tilde 2}
\begin{array}{ll}
\displaystyle \int_{\R^n \setminus B_{2\epsilon}} \varphi_{0, \tau}^{\frac{2(m+n)}{m+n-2}}dx 
& \leq C \epsilon^{-n-2m}\tau^{m + \frac{n}{2} }.
\end{array}
\end{equation}

Using the expansion for the volume form \eqref{dV normal}, and the fact that $v$ is bounded, we have in the second integral on the right-hand side of \eqref{V - Vt tilde 1} that \begin{equation}\label{V - Vt tilde 3}
\begin{array}{ll}
\left |\displaystyle\int_{A_{\epsilon}} \varphi_{0, \tau}^{\frac{2(m+n)}{m+n-2}} dx - \int_{ A_{\epsilon}} f_{0, \tau}^{\frac{2(m+n)}{m+n-2}}v^{m}dV_{g} \right| & \leq C(1+ \epsilon C) \displaystyle \int_{A_{\epsilon}} \varphi_{0, \tau}^{\frac{2(m+n)}{m+n-2}} dx  \vspace{0.2cm}\\
& \leq C(1+ \epsilon C) \epsilon^{-n-2m}\tau^{m + \frac{n}{2} }.
\end{array}
\end{equation}

By the expansion for the volume form \eqref{dV normal}, the Taylor expansion around $p$ for $v^{m}$ and the symmetries of the ball in the third integral on the right-hand side of \eqref{V - Vt tilde 1}, we obtain \small
\begin{equation}\label{V - Vt tilde 4}
\begin{array}{rl}
\displaystyle \int_{B_{\epsilon}} \varphi_{0, \tau}^{\frac{2(m+n)}{m+n-2}} dx - \int_{B_{\epsilon}} \varphi_{0, \tau}^{\frac{2(m+n)}{m+n-2}}v^{m}dV_{g}   = & 
-\dfrac{\Delta_{g}v^m}{2n} \displaystyle \int_{B_{\epsilon}} \varphi_{0, \tau}^{\frac{2(m+n)}{m+n-2}} |x|^2 dx \vspace{0.2cm}\\
 & - \displaystyle \frac{1}{24} (v^{m})_{ijkl}\int_{B_{\epsilon}}  \varphi_{0, \tau}^{\frac{2(m+n)}{m+n-2}} x_{i} x_{j} x_{k} x_{l} dx  \vspace{0.2cm}\\
 & + \displaystyle (\frac{1}{40} R_{ij,kl} + \frac{1}{180} R_{rijs} R_{rkls}) \int_{B_{\epsilon}}  \varphi_{0, \tau}^{\frac{2(m+n)}{m+n-2}} x_{i} x_{j} x_{k} x_{l} dx  \vspace{0.2cm}\\
& + \displaystyle \int_{B_{\epsilon}} \varphi_{0, \tau}^{\frac{2(m+n)}{m+n-2}} O(|x|^6) dx.\\
\end{array}
\end{equation} \normalsize

To analyze \eqref{V - Vt tilde 4}, we consider the first integral on its right-hand side
\begin{equation}\label{V - Vt tilde 5}
\begin{array}{ll}
\displaystyle\int_{B_{\epsilon}} \varphi_{0, \tau}^{\frac{2(m+n)}{m+n-2}} |x|^2 dx  
& = \displaystyle \frac{\tau}{c(m,n)^{\frac{n+2}{2}}} I_7 + O(\epsilon^{2-2m-n} \tau^{m + \frac{n}{2}}).
\end{array}
\end{equation} 

Additionally, we have
\begin{equation}\label{V - Vt tilde 5*}
\begin{array}{ll}
\displaystyle\int_{B_{\epsilon}} \varphi_{0, \tau}^{\frac{2(m+n)}{m+n-2}} |x|^4 dx  
& = \displaystyle \frac{\tau^2}{c(m,n)^{\frac{n+4}{2}}} I_3 + O(\epsilon^{4-2m-n} \tau^{m + \frac{n-4}{2}}).
\end{array}
\end{equation}

For the last integral on the right-hand side of \eqref{V - Vt tilde 4}, recalling that $q < \min \{2m-n-4, 1 \}$ and $\epsilon < 1$, we obtain
\begin{equation}\label{V - Vt tilde 6}
\begin{array}{ll}
\displaystyle \int_{B_{\epsilon}} \varphi_{0, \tau}^{\frac{2(m+n)}{m+n-2}} |x|^6 dx 
&\leq C \tau^{2 + \frac{q}{2}}.
\end{array}
\end{equation}

Equalities \eqref{Laplacian vk} for $k=m$, \eqref{V - Vt tilde 1}, \eqref{V - Vt tilde 4}, \eqref{V - Vt tilde 5}, and \eqref{V - Vt tilde 5*}; inequalities \eqref{V - Vt tilde 2}, \eqref{V - Vt tilde 3}, and  \eqref{V - Vt tilde 6}; and similar arguments like we used in \eqref{last integral Case 5 order 2} to \eqref{last integral Case 7 order 2} lead to formula \eqref{V - Vt tilde 7}.

It follows that the terms $V_{\tau}$ are uniformly bounded away from zero. Using estimate \eqref{V - Vt tilde 7} and Taylor expansion around $V$ for the functions $x^{-\frac{m+n-2}{m+n}}$ and $x^{-\frac{m+n-1}{m+n}}$, we obtain formulas \eqref{V - Vt potencia tilde 1} and \eqref{V - Vt potencia tilde 2}, respectively.
\end{proof}

\subsection{Proof of Theorem A} In this subsection, we prove Theorem A.

\textbf{\textit{Proof of Theorem A}}.
Let $(M^n, g, v^{m} dV_g)$ be a compact smooth metric measure space, where $(M,g)$ is nonlocally conformally flat. Then, there exist $p \in M$ such that the Weyl tensor in $p$ is nonzero, i.e. $|W|_g(p) \neq 0 $. Since $|W|_g$ is conformally invariant, and with the equality \eqref{W conformal Case} and Lemma \ref{cambio conforme especial}, we can assume without loss of generality that $v(p) = 1$, $\nabla_{g} v(p) = 0$, $\Delta_{g} v(p) = 0$, and $Ric_g(p) = 0$. Let $\tau >0$ and consider $f_{\tau}$, $\tilde{f}_{\tau}$, and  $V_{\tau}$, defined by \eqref{f tau}, \eqref{f tau tilde}, and \eqref{definition Vtau}, respectively.

Using the lemmas in subsection \ref{local expansion}, we estimate  $\mathcal{W}[M^n,  g, v^{m} dV_{g}](\tilde{f}_{\tau}, \tilde{\tau})$ where $\tilde{\tau} = \tau V^{-\frac{2}{2m+n}}$. First, we obtain
\begin{equation}\label{V - Vt potencia tilde 3}
\begin{array}{ll}
\dfrac{1}{(m+n-2)^2 } \displaystyle \int_{B_{\epsilon}} |\nabla \varphi_{0, \tau}|^{2}dx & = \frac{\tau^{\frac{n}{m+n} -1}}{ c(m,n)^{\frac{n-2}{2}}} I_7 + O(\epsilon^{2-2m-n} \tau^{\frac{n}{m+n} + m + \frac{n}{2}}).\\
\end{array}
\end{equation}

Estimate \eqref{gradiente Case 6} in Lemma \ref{Estimate gradiente}, estimate \eqref{V - Vt potencia tilde 1} in Lemma \ref{Estimate V},  $\tilde{\tau} = \tau V^{\frac{-2}{2m+n}}$, and equality \eqref{V - Vt potencia tilde 3} implies that
\begin{equation}\label{V - Vt potencia tilde 4}
\begin{array}{rl}
\displaystyle  \dfrac{\tilde{\tau}^{\frac{m}{m+n}}}{ V_{\tau}^{\frac{m+n-2}{m+n}}}  \int_{B_{2\epsilon}} |\nabla_g f_{\tau}|_g^{2}v^m dV_{g}
= & \displaystyle \dfrac{\tilde{\tau}^{\frac{m}{m+n}}}{V^{\frac{m+n-2}{m+n}}} \int_{B_{\epsilon}} |\nabla \varphi_{0, \tau}|^{2}dx \vspace{0.2cm}  \\
 & - \dfrac{\tau m (m+n-2)^3(I_7)^{2} \Delta_{g}v}{2n(m+n)c(m,n)^{n}V^{\frac{2m}{(m+n)(2m+n)} + \frac{2m+2n-2}{m+n}}} \vspace{0.2cm} \\
 &+ \dfrac{\tau^2 (m+n-2)^3 I_7 I_3 A_1}{2n(n+2)(m+n)c(m,n)^{n+1}V^{\frac{2m}{(m+n)(2m+n)} + \frac{2m+2n-2}{m+n}}} \vspace{0.2cm}\\
 &+ \dfrac{\tau^{2}  (m+n-2)^3(m+n-1) m^2 (I_7)^3 (\Delta_{g}v)^2}{4n^2(m+n)^2 c(m,n)^{\frac{3n+2}{2}} V^{\frac{2m}{(m+n)(2m+n)} + \frac{3m+3n-1}{m+n}}} \vspace{0.2cm}\\
 & + \dfrac{\tau m(m+n-2)^2  I_3 \Delta_{g}v }{2nc(m,n)^{\frac{n}{2}}V^{\frac{2m}{(m+n)(2m+n)} + \frac{m+n-2}{m+n}}} \vspace{0.2cm}\\
 &- \dfrac{\tau^2 m^2(m+n-2)^3  I_3 I_7 (\Delta_{g}v)^2 }{4n^2(m+n)c(m,n)^{n+1} V^{\frac{2m}{(m+n)(2m+n)} + \frac{2m+2n-2}{m+n}}} \vspace{0.2cm}\\
 &-\dfrac{\tau^2 (m+n-2)^2 I_4 A_1}{2n(n+2) c(m,n)^{\frac{n+2}{2}} V^{\frac{2m}{(m+n)(2m+n)} + \frac{m+n-2}{m+n}}} \vspace{0.2cm}\\
 &+  O(\tau^{ m + \frac{n}{2}} \epsilon^{-2m-n})  + O(\tau^{2 + \frac{q}{2}}).
\end{array}
\end{equation}

Equality $\tilde{\tau} = \tau V^{\frac{-2}{2m+n}}$, estimate \eqref{R5 Case} in Lemma \ref{Estimate R}, and estimate \eqref{V - Vt potencia tilde 1} in Lemma \ref{Estimate V} yield 
\begin{equation}\label{V - Vt potencia tilde 1 order 2}
\begin{array}{ll}
\displaystyle \dfrac{\tilde{\tau}^{\frac{m}{m+n}}}{ V_{\tau}^{\frac{m+n-2}{m+n}}} \int_{B_{2\epsilon} } R^{m}_{\phi} f_{\tau}^2 v^m dV_g
 = & -\dfrac{2\tau m\Delta_{g}v I_1 }{c(m,n)^{\frac{n}{2}} V^{\frac{2m}{(m+n)(2m+n)} + \frac{m+n-2}{m+n}}}  \vspace{0.2cm}\\
& + \dfrac{\tau^{2} m^2(m+n-2) (\Delta_{g}v)^2 I_1 I_7}{n (m+n) c(m,n)^{n+1} V^{\frac{2m}{(m+n)(2m+n)} + \frac{2m+2n-2}{m+n}}} \vspace{0.2cm}\\
&+  \dfrac{\tau^{2} ( \Delta_g  R^{m}_{\phi}  -2m^2  (\Delta_g v)^2 ) I_2 }{2nc(m,n)^{\frac{n+2}{2}} V^{\frac{2m}{(m+n)(2m+n)} + \frac{m+n-2}{m+n}}}  \vspace{0.2cm}\\
& +  O(\tau^{ m + \frac{n}{2}} \epsilon^{-2m-n}) + O(\tau^{2 + \frac{q}{2}}).
\end{array}
\end{equation} 

Now, we obtain
\begin{equation}\label{V - Vt potencia tilde 5}
\begin{array}{ll}
\displaystyle \int_{B_{\epsilon}} \varphi_{0, \tau}^{\frac{2(m+n-1)}{m+n-2}}dx
& = \frac{\tau^{\frac{n}{2(m+n)}}}{ c(m,n)^{\frac{n}{2}}} I_8 + O(\epsilon^{2-2m-n} \tau^{\frac{n}{2(m+n)} + m + \frac{n-2}{2}})\\
\end{array}
\end{equation} where
\begin{equation}\label{I5 Case}
I_8 = \int_{\R^n} (1 + |y|^2)^{-(m+n-1)}dy.
\end{equation}

Equality $\tilde{\tau} = \tau V^{\frac{-2}{2m+n}}$, estimative \eqref{V - Vt potencia tilde 2} in Lemma \ref{Estimate V}, and equality \eqref{V - Vt potencia tilde 5} imply that \small
\begin{equation}\label{V - Vt potencia tilde 6}
\begin{array}{ll}
\displaystyle \dfrac{m \displaystyle \int_{B_{2\epsilon}} f_{\tau}^{\frac{2(m+n-1)}{m+n-2}}v^{m-1} dV_{g} }{\tilde{\tau}^{\frac{n}{2(m+n)}} V_{\tau}^{\frac{m+n-1}{m+n}}}  
 = & \displaystyle \dfrac{m \displaystyle \int_{B_{\epsilon}} \varphi_{0, \tau}^{\frac{2(m+n-1)}{m+n-2}}dx}{\tilde{\tau}^{\frac{n}{2(m+n)}} V^{\frac{m+n-1}{m+n}}}   \vspace{0.2cm}\\
& - \displaystyle \dfrac{\tau m^2(m+n-1) I_7 I_8 \Delta_{g}v}{2n(m+n)c(m,n)^{n+1} V^{\frac{-n}{(m+n)(2m+n)} + \frac{2m+2n-1}{m+n}}} \vspace{0.2cm}\\
& + \dfrac{\tau^2 m(m+n-1) I_8 I_3 A_1}{2n(n+2)(m+n)c(m,n)^{n+2} V^{\frac{-n}{(m+n)(2m+n)} + \frac{2m+2n-1}{m+n}}}  \vspace{0.2cm}\\
& + \dfrac{\tau^{2}  m^3(m+n-1)(2m+2n-1)  (I_7)^2 I_8(\Delta_{g}v)^2}{8n^2(m+n)^2 c(m,n)^{\frac{3n+4}{2}} V^{\frac{-n}{(m+n)(2m+n)} + \frac{3m+3n-1}{m+n}} } \vspace{0.2cm}\\
& + \dfrac{\tau m(m-1)I_5 \Delta_{g}v }{2n c(m,n)^{\frac{n+2}{2}} V^{\frac{-n}{(m+n)(2m+n)} + \frac{m+n-1}{m+n}}} \vspace{0.2cm}\\
& - \dfrac{\tau^{2}  m^2(m-1)(m+n-1)I_5 I_7 (\Delta_{g}v)^2}{4n^2(m+n) c(m,n)^{n+2} V^{\frac{-n}{(m+n)(2m+n)} + \frac{2m+2n-1}{m+n}} } \vspace{0.2cm}\\
& -\dfrac{\tau^2 m I_6 A_2}{2n(n+2) c(m,n)^{\frac{n+4}{2}}V^{\frac{-n}{(m+n)(2m+n)} + \frac{m+n-1}{m+n}} } \vspace{0.2cm} \\ 
& +  O(\tau^{ m + \frac{n-2}{2}} \epsilon^{-2m-n})  + O(\tau^{2 + \frac{q}{2}}).
\end{array}
\end{equation} \normalsize

The equality \eqref{nu Rn case} and the estimates \eqref{V - Vt potencia tilde 4}, \eqref{V - Vt potencia tilde 1 order 2}, and \eqref{V - Vt potencia tilde 6} imply that \eqref{w8} takes the following form 
\begin{equation}\label{w10}
\begin{array}{rl}
\mathcal{W}[M^n,  g, v^{m} dV_{g}](\tilde{f}_{\tau}, \tilde{\tau}) + m = & \nu[\R^n,  dx^2, dV, m] + m + \tau  A_3  + \tau^2 A_4 \vspace{0.2cm}\\
&+  O(\tau^{ m + \frac{n-2}{2}} \epsilon^{-2m-n}) + O(\tau^{2 + \frac{q}{2}})
\end{array}
\end{equation} where
\begin{equation}\label{A3}
\begin{array}{lll}
A_3  : = &  \displaystyle \frac{mV^{-\frac{2m}{(m+n)(2m+n)} - \frac{m+n-2}{m+n}} \Delta_{g}v}{2c(m,n)^{\frac{n}{2}}} \Bigg ( & \displaystyle \frac{(m+n-2)^2}{n}    I_3  + \frac{m-1}{n c(m,n)} I_5  - \frac{m+n-2}{(m+n-1)} I_1 \vspace{0.2cm}\\
& &\left. \displaystyle  - \frac{m(m+n-1) I_7 I_8 }{n(m+n)c(m,n)^{\frac{n+2}{2}} V} - \frac{(m+n-2)^3 (I_7)^{2}}{n(m+n)c(m,n)^{\frac{n}{2}}V} \right) 
\end{array}
\end{equation} and \small
\begin{equation}\label{A4}
\begin{array}{lll}
A_4 : = & \dfrac{V^{\frac{-n}{(m+n)(2m+n)} + \frac{m+n-1}{m+n}}}{c(m,n)^{\frac{n+2}{2}}} \bigg( & \dfrac{(m+n-2)^3 I_7 I_3 A_1}{2n(n+2)(m+n)c(m,n)^{\frac{n}{2}}V}  \vspace{0.2cm}\\
& &+ \dfrac{ m^2(m+n-2)^3(m+n-1) (\Delta_{g}v)^2 (I_7)^3 }{4 n^2 (m+n)^2 c(m,n)^{n} V^{2}}  \vspace{0.2cm}\\ 
& & - \dfrac{m^2(m+n-2)^3 (\Delta_{g}v)^2 I_3 I_7  }{4n^2(m+n)c(m,n)^{\frac{n}{2}}V}  -\dfrac{(m+n-2)^2 I_4 A_1}{2n(n+2) } 
\vspace{0.2cm}\\ 
& & + \dfrac{m^2(m+n-2)^2 (\Delta_{g}v)^2 I_1 I_7}{4 n (m+n)(m+n-1) c(m,n)^{\frac{n}{2}} V} \vspace{0.2cm}\\
& &+  \dfrac{ (m+n-2) ( \Delta_g  R^{m}_{\phi}  -2m^2  (\Delta_g v)^2 )I_2}{8n (m+n-1)} 
\vspace{0.2cm}\\ 
& &+ \dfrac{ m(m+n-1) I_8 I_3 A_1}{2n(n+2)(m+n)c(m,n)^{\frac{n+2}{2}} V} 
\vspace{0.2cm}\\
& &+ \dfrac{  m^3(m+n-1)(2m+2n-1)   (I_7)^2 I_8 (\Delta_{g} v)^2}{8 n^2 (m+n)^2 c(m,n)^{n+1} V^{2} } \vspace{0.2cm}\\
& &- \dfrac{  m^2(m-1)(m+n-1) I_5 I_7(\Delta_{g} v)^2}{4n^2(m+n) c(m,n)^{\frac{n+2}{2}} V}  \left. -\dfrac{ m I_6 A_2}{2n(n+2) c(m,n)  } \right).
\end{array}
\end{equation} \normalsize

Using the comparisons for integrals given in Lemma \ref{comparacion integrales} (see appendix below) and the equality $c(m,n) = \frac{m+n-1}{(m+n-2)^2}$, it follows that
\begin{equation}\label{A3 comparcion}
\begin{array}{rll}
A_3  = & \displaystyle \frac{ m(m+n-2)^2 \Delta_{g}v I_7}{2c(m,n)^{\frac{n}{2}} V^{\frac{2m}{(m+n)(2m+n)} + \frac{m+n-2}{m+n}}} \bigg (  & \displaystyle  \frac{n+2}{n(2m+n-4)} -\frac{4}{n(2m+n-4)}     \vspace{0.2cm}\\
& &\displaystyle  + \frac{2(m-1)}{n (2m+n-4)} - \frac{m+n-2}{ (m+n)(2m+n-2)} \vspace{0.2cm} \\
& &\displaystyle \left.   - \frac{2m(m+n-1)  }{n(m+n)(2m+n-2)}   \right) \\
= & 0. &
\end{array}
\end{equation}

Next, we analyze $A_4$. For this purpose, note that
\begin{equation}\label{computation m y n}
\begin{array}{ll}
\frac{m-5}{n(2m+n-4)(2m+n-6)} = & \frac{(m+n-2)}{2(m+n)(2m+n-2)(2m+n-4)} \vspace{0.2cm}\\
 &- \frac{ (n+4)}{2n(2m+n-4)(2m+n-6)} + \frac{m(m+n-1)}{n(m+n)(2m+n-2)(2m+n-4)}.
\end{array}
\end{equation} By equality \eqref{computation m y n} and Lemma \ref{comparacion integrales}, we obtain
\small

\begin{equation}\label{A4 comparcion}
\begin{array}{lll}
A_4 = & \displaystyle \dfrac{I_7 (m+n-2)^2 }{c(m,n)^{\frac{n+2}{2}}V^{\frac{n}{(m+n)(2m+n)} - \frac{m+n-1}{m+n}}} \bigg (& \dfrac{(m-5) A_1}{n(2m+n-4)(2m+n-6)}  \vspace{0.2cm}\\
& & + \dfrac{ m^2(m+n-2)(m+n-1) (\Delta_{g}v)^2 }{4(m+n)^2 (2m+n-2)^2} \vspace{0.2cm}\\
& &- \dfrac{m^2(m+n-2)(n+2) (\Delta_{g}v)^2  }{4 n(m+n)(2m+n-2)(2m+n-4)}  \vspace{0.2cm}\\
& & + \dfrac{m^2(m+n-2)(\Delta_{g}v)^2}{n (m+n) (2m+n-2) (2m+n-4)} \vspace{0.2cm}\\
& & +  \dfrac{ \Delta_g  R^{m}_{\phi}  -2m^2  (\Delta_g v)^2 }{2n (2m+n-4)(2m+n-6)}  \vspace{0.2cm}\\
& &     + \dfrac{ m^3(m+n-1)(2m+2n-1) (\Delta_{g}v)^2}{4 n(m+n)^2 (2m+n-2)^2 } \vspace{0.2cm}\\
& & - \dfrac{  m^2(m-1)(m+n-1)  (\Delta_{g}v)^2}{2n(m+n)(2m+n-2)(2m+n-4) }  \vspace{0.2cm}\\
& & \left. -\dfrac{m A_2}{n(2m+n-4) (2m+n-6)} \right).
\end{array}
\end{equation} \normalsize

The definitions of $A_1$ and $A_2$ in \eqref{A1 2} and \eqref{A2}, respectively, and equality \eqref{Delta Rm} imply that $A_4$ takes the form
\begin{equation}\label{A4 nuevo}
\begin{array}{l}
A_4 = \dfrac{\left( -\frac{1}{6}  |W|_g^2 + m(m-1)|\text{Hess} \, v|_{g}^2 \right)}{2n(2m+n-4)(2m+n-6)} + A_5(\Delta_{g}v)^2 
\end{array}
\end{equation} where 
\begin{equation}\label{A5}
\begin{array}{ll}
A_5  = & \frac{m(1-m)}{4n(2m+n-4)(2m+n-6)} + \frac{m^2(m+n-1)(m+n-2)}{4(m+n)^2(2m+n-2)^2} -  \frac{m^2(m+n-2)(n+2)}{4n(m+n)(2m+n-2)(2m+n-4)} \vspace{0.2cm}\\
&+\frac{m^2(m+n-2)}{n(m+n)(2m+n-2)(2m+n-4)}  +  \frac{m^3(m+n-1)(2m+2n-1)}{4n(m+n)^2(2m+n-2)^2} - \frac{m^2(m-1)(m+n-1)}{2n(m+n)(2m+n-2)(2m+n-4)}.
\end{array}
\end{equation} 

Since the problem coincides with the Yamabe problem when $m=0$, we analyze the term on the right-hand side of \eqref{A4 nuevo} for $m > 0$. Note that $m(m-1)|\text{Hess} \, v|_{g}^2$ is nonpositive if $0<m \leq 1$. For $n \geq 7$, we have $\lim \limits_{m \to 0} A_5 = 0$, and for $n=6$, the term $A_5$ is bounded and 
\[
\lim \limits_{m \to 0} \frac{-\frac{1}{6}  |W|_g^2(p) }{2n(2m+n-4)(2m+n-6)} = -\infty.
\]

Then, for $n\geq 6$ there exists $ 0 < \delta \leq 1$ such that $A_4<0$ for $m < \delta$.

 For $0 < m \leq \delta$, using $A_3 = 0$ and $A_4 <0$, taking $\epsilon$ as small and fixed, and after choosing a sufficiently small $\tau $, equality \eqref{w10} yields 
\begin{equation}\label{w}
\begin{array}{l}
\nu[M^n,  g, v^{m} dV_{g}] < \nu[\R^n,  dx^2, dV, m]. \\
\end{array}
\end{equation}

Proposition \ref{W y Yamabe tilda} implies
\begin{equation}\label{Lambda}
\begin{array}{l}
\Lambda[M^n,  g, v^{m} dV_{g}] < \Lambda[\R^n,  dx^2, dV, m]. \\
\end{array}
\end{equation}

Theorem \ref{Aubin Case} concludes the proof for $0 < m \leq \delta$. Finally, Theorem \ref{inductive constants} and an inductive argument imply that
\begin{equation}\label{inductivo}
\begin{array}{ll}
\Lambda[M^n,  g, v^{m+1} dV_{g}] & \leq  \dfrac{\Lambda[M^n,  g, v^{m} dV_{g}]}{\Lambda[\R^n,  dx^2, dV, m]}  \Lambda[\R^n,  dx^2,  dV, m+1] \vspace{0.2cm}\\
& < \Lambda[\R^n,  dx^2, dV, m+1],
\end{array}
\end{equation} which leads to our result for every $m \in  \bigcup \limits_{i \in \mathbb{N} \cup \{ 0 \}} [i,i + \delta)$. $\hspace{5cm} \square$

\begin{rem}\label{comentario n=6}
Note that the proof works if $A_5 \leq 0$, which is false for a general $m>0$.
\end{rem}

\section{Appendix}\label{appendix}

In this section, we show some calculus lemmas that we used in the proof of Theorem A. 

\begin{lema}\label{integral con simetria} 
Letting $1 \leq i,j \leq n$ with $i \neq j$, then
\begin{equation}
\begin{array}{rl}
\displaystyle \int_{B_{\epsilon}}  \frac{x_{i}^4 |x|^l }{(1 + \frac{c(m,n)}{\tau}|x|^2)^{k} } dx  & = 3 \displaystyle \int_{B_{\epsilon}}  \frac{x_{i}^2 x_j^2 |x|^l  }{(1 + \frac{c(m,n)}{\tau}|x|^2)^{k} } dx \vspace{0.2cm}\\
& = \displaystyle \frac{3}{n(n+2)} \int_{B_{\epsilon}}  \frac{|x|^{4 + l}  }{(1 + \frac{c(m,n)}{\tau}|x|^2)^{k} } dx. \\
\end{array}
\end{equation}
\end{lema}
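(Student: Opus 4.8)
The plan is to exploit the fact that both the domain $B^n_{\epsilon}$ and the weight $\omega(x) := |x|^l \left(1 + \frac{c(m,n)}{\tau}|x|^2\right)^{-k}$ are rotationally invariant, so in particular invariant under every permutation of the coordinates and under any orthogonal change of variable preserving $|x|$. First I would record the two elementary symmetry reductions: since the map that interchanges $x_i$ and $x_1$ while fixing the remaining coordinates is an isometry of $B^n_{\epsilon}$ preserving $\omega$, the value of $\int_{B^n_{\epsilon}} x_i^4\,\omega\,dx$ is independent of $i$, and likewise $\int_{B^n_{\epsilon}} x_i^2 x_j^2\,\omega\,dx$ is independent of the unordered pair $\{i,j\}$ with $i\neq j$. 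These two facts reduce the first claimed equality to a single numerical identity, namely that the ratio between $\int_{B^n_{\epsilon}} x_1^4\,\omega\,dx$ and $\int_{B^n_{\epsilon}} x_1^2 x_2^2\,\omega\,dx$ equals $3$.

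The only substantive step is precisely this identity $\int_{B^n_{\epsilon}} x_1^4\,\omega\,dx = 3\int_{B^n_{\epsilon}} x_1^2 x_2^2\,\omega\,dx$, which I would prove by the rotation through angle $\pi/4$ in the $(x_1,x_2)$-plane, setting $x_1 = (u+v)/\sqrt{2}$ and $x_2 = (u-v)/\sqrt{2}$ and leaving the other coordinates untouched. This map is an isometry, so it preserves $B^n_{\epsilon}$, the Lebesgue measure, and $\omega$ (which depends only on $|x| = \sqrt{u^2+v^2+\cdots}$). Expanding $x_1^4 = \tfrac14(u+v)^4$ and discarding the terms carrying an odd power of $v$ (they vanish by the symmetry $v \mapsto -v$ of the domain and of $\omega$) leaves $\int x_1^4\,\omega = \tfrac14\bigl(\int u^4\,\omega + 6\int u^2 v^2\,\omega + \int v^4\,\omega\bigr)$. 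Applying the permutation symmetry above to the new variables $u,v$ gives $\int u^4\,\omega = \int v^4\,\omega = \int x_1^4\,\omega$ and $\int u^2 v^2\,\omega = \int x_1^2 x_2^2\,\omega$, so the relation collapses to $\int x_1^4\,\omega = \tfrac12\int x_1^4\,\omega + \tfrac32\int x_1^2 x_2^2\,\omega$, which rearranges to the desired factor $3$.

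Finally I would extract the constant $\tfrac{3}{n(n+2)}$ by summation. Writing $|x|^4 = \bigl(\sum_i x_i^2\bigr)^2 = \sum_i x_i^4 + \sum_{i\neq j} x_i^2 x_j^2$ and integrating against $\omega$ yields $\int_{B^n_{\epsilon}} |x|^4\,\omega\,dx = n\int x_1^4\,\omega + n(n-1)\int x_1^2 x_2^2\,\omega$; substituting $\int x_1^4\,\omega = 3\int x_1^2 x_2^2\,\omega$ gives $\int |x|^4\,\omega = n(n+2)\int x_1^2 x_2^2\,\omega$, and since $|x|^4\,\omega = |x|^{4+l}\bigl(1+\frac{c(m,n)}{\tau}|x|^2\bigr)^{-k}$ this is exactly the second equality. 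No step is a genuine obstacle; the one point to handle with care is checking that the $\pi/4$-rotation really does preserve the radial weight and the ball, so that the permutation identities may be applied to $u,v$ on the same footing as to $x_1,x_2$.
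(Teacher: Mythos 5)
Your proof is correct, but it takes a genuinely different route from the paper's. The paper passes to polar coordinates and evaluates the angular integrals $\int_{S^{n-1}_{\rho}} x_i^4\, dS_{\rho}$ and $\int_{S^{n-1}_{\rho}} x_i^2 x_j^2\, dS_{\rho}$ by means of the sphere-averaging formula
\[
\int_{S^{n-1}_{\rho}} q \, dS_{\rho} = \frac{\rho^2}{d(d+n-2)} \int_{S^{n-1}_{\rho}} \Delta q \, dS_{\rho}
\]
for a homogeneous polynomial $q$ of degree $d$; iterating it on $x_i^4$ and $x_i^2x_j^2$ produces the factor $3$ and the constant $\frac{3}{n(n+2)}$ simultaneously, and the radial weight $|x|^l\bigl(1+\frac{c(m,n)}{\tau}|x|^2\bigr)^{-k}$ is then reinstated by integrating in $\rho$. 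You never leave Cartesian coordinates: permutation symmetry makes $\int x_i^4\,\omega$ and $\int x_i^2 x_j^2\,\omega$ independent of the indices, the $\pi/4$-rotation in the $(x_1,x_2)$-plane gives the ratio $3$ (your expansion and the rearrangement $\int x_1^4\,\omega = \tfrac12\int x_1^4\,\omega + \tfrac32\int x_1^2x_2^2\,\omega$ are correct, since the rotation preserves the ball, Lebesgue measure, and the radial weight), and integrating the algebraic identity $|x|^4=\sum_i x_i^4+\sum_{i\neq j}x_i^2x_j^2$ against $\omega$ yields $\frac{3}{n(n+2)}$. Your argument is more elementary and self-contained, requiring only orthogonal invariance rather than the sphere-averaging formula, which itself needs justification (via Green's identity or spherical harmonics); the paper's argument is shorter once that formula is granted and extends mechanically to higher even moments. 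Both treatments of the weight are ultimately the same, since everything radial factors out of the angular part of the integration.
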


\begin{proof} We will use the formula 
\[
\int_{S^{n-1}_{\rho}} q \,  dS_{\rho} = \frac{\rho^2}{d(d+n-2)} \int_{S^{n-1}_{\rho}} \Delta q \,  dS_{\rho},
\] where $S^{n-1}_{\rho}$ is the sphere of radius $\rho$ and $q$ is a homogeneous polynomial of degree $d$. Then
\[
\displaystyle \int_{S^{n-1}_{\rho}}  x_{i}^4   dS_{\rho}  = 3 \int_{S^{n-1}_{\rho}}  x_{i}^2 x_{j}^2   dS_{\rho} = \frac{3}{n(n+2)} \rho^4 \int_{S^{n-1}_{\rho}} dS_{\rho}.
\]

Using the last equality and polar coordinates, we obtain the result. \end{proof}

Next, we compare the integrals $I_1$, $I_2$, $I_3$, $I_4$, $I_5$, $I_6$, $I_7$, $I_8$ and $V$ considered in the above section. This kind of comparison appeared, for example, in \cite{Aubin} and \cite{EscobarAn}. 

\begin{lema}\label{comparacion integrales}
We obtain the following equalities 
\[
I_1 = \frac{4(m+n-1)(m+n-2)}{n(2m+n-4)} I_7, \quad I_2 = \frac{4(m+n-1)(m+n-2)}{(2m+n-4)(2m+n-6)}I_7,
\] 
\[
I_3 = \frac{n+2}{2m+n-4} I_7, \quad I_4 = \frac{(n+2)(n+4)}{(2m+n-4)(2m+n-6)} I_7,
\]
\[
 I_5 = \frac{2(m+n-1)}{2m+n-4} I_7, \quad I_6 =\frac{2(m+n-1)(n+2)}{(2m+n-4)(2m+n-6)}I_7, 
\]
\[
  I_8 = \frac{2(m+n-1)}{n}I_7, \quad \text{and} \quad \frac{I_7}{V} =  \frac{n c(m,n)^{\frac{n}{2}}}{2m+n-2}.
\]
\end{lema}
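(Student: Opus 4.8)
The plan is to realize all nine quantities as special values of the single two–parameter integral
\[
G(a,b):=\int_{\R^n}|y|^{2a}\,(1+|y|^2)^{-b}\,dy ,
\]
to evaluate $G$ in closed form, and then to read off each of the eight comparisons from the functional equation $\Gamma(z+1)=z\,\Gamma(z)$. This is the classical device used in \cite{Aubin} and \cite{EscobarAn}, adapted to the present exponents.

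First I would reduce $G(a,b)$ to a Beta integral. Passing to polar coordinates gives $G(a,b)=\omega_{n-1}\int_0^\infty r^{2a+n-1}(1+r^2)^{-b}\,dr$, where $\omega_{n-1}$ is the area of the unit sphere $S^{n-1}$; the substitution $u=r^2$ then yields
\[
G(a,b)=\frac{\omega_{n-1}}{2}\int_0^\infty u^{a+\frac n2-1}(1+u)^{-b}\,du
=\frac{\omega_{n-1}}{2}\,\frac{\Gamma\!\left(a+\frac n2\right)\Gamma\!\left(b-a-\frac n2\right)}{\Gamma(b)},
\]
valid whenever $b-a-\tfrac n2>0$. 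Next I would identify the integrals of the statement directly from their definitions: $I_1=G(0,m+n-2)$, $I_2=G(1,m+n-2)$, $I_3=G(2,m+n)$, $I_4=G(3,m+n)$, $I_5=G(1,m+n-1)$, $I_6=G(2,m+n-1)$, $I_7=G(1,m+n)$ and $I_8=G(0,m+n-1)$. For $\tilde V$, I would substitute $\varphi_{0,1}^{\frac{2(m+n)}{m+n-2}}=(1+c(m,n)|x|^2)^{-(m+n)}$ into its definition and apply the change of variables $y=\sqrt{c(m,n)}\,x$, obtaining $\tilde V=c(m,n)^{-n/2}G(0,m+n)$.

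With these identifications each asserted equality is a ratio $I_k=C\,I_7$ in which the common prefactor $\omega_{n-1}/2$ cancels, so the ratio becomes a product of quotients $\Gamma(\alpha)/\Gamma(\alpha')$ whose arguments differ by integers; each such quotient collapses to a rational function of $m$ and $n$ by repeated use of $\Gamma(z+1)=z\,\Gamma(z)$. For instance,
\[
\frac{I_1}{I_7}=\frac{\Gamma(\frac n2)}{\Gamma(1+\frac n2)}\cdot\frac{\Gamma(m+\frac n2-2)}{\Gamma(m+\frac n2-1)}\cdot\frac{\Gamma(m+n)}{\Gamma(m+n-2)}
=\frac{2}{n}\cdot\frac{2}{2m+n-4}\cdot(m+n-1)(m+n-2),
\]
which is the first claimed identity; the remaining six $I_k$ follow in exactly the same fashion, and the last equality $I_7/\tilde V=\frac{n\,c(m,n)^{n/2}}{2m+n-2}$ uses in addition the normalization factor $c(m,n)^{n/2}$. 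I would present $I_1/I_7$ in full and observe that the others are structurally identical.

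The only point requiring care — and the closest thing to an obstacle — is convergence: the Beta representation of $G(a,b)$ needs $b-a-\tfrac n2>0$ for each pair, the most restrictive case being $I_4=G(3,m+n)$, which demands $2m+n-6>0$. This is guaranteed by the standing hypotheses $n\ge 7$ and $m\ge 0$, and it is no coincidence that $2m+n-6$ appears in the denominators of the statement: the same quantity controls both the integrability of $I_4$ and the size of the corresponding Gamma-quotient. Once convergence is secured the computation is purely mechanical bookkeeping, so I expect no genuine difficulty beyond organizing the eight ratio calculations.
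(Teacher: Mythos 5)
Your proposal is correct: all nine identifications $I_k=G(a,b)$ and $\tilde V=c(m,n)^{-n/2}G(0,m+n)$ are accurate, the Beta--Gamma evaluation of $G(a,b)$ is valid under the convergence condition you state, and I have checked that each of the eight ratios you would compute via $\Gamma(z+1)=z\,\Gamma(z)$ reproduces exactly the constants in the statement (your worked example $I_1/I_7$ is right, and the remaining cases go through identically). However, your route is genuinely different from the paper's. The paper never evaluates any of the integrals in closed form: after passing to polar coordinates it derives the ratios purely by two elementary devices, namely the integration-by-parts recursion
\begin{equation}\nonumber
\int_{0}^{\infty}\frac{r^{l+1}}{(1+r^{2})^{k}}\,dr=\frac{l}{2(k-1)}\int_{0}^{\infty}\frac{r^{l-1}}{(1+r^{2})^{k-1}}\,dr
\end{equation}
and the algebraic splitting $(1+r^{2})^{-(k-1)}=(1+r^{2})^{-k}+r^{2}(1+r^{2})^{-k}$, chaining these to relate each $I_k$ to $I_7$ step by step. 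Your integration-by-parts-free approach trades that chain of ad hoc recursions for a single closed-form formula, which makes the eight verifications uniform and mechanical, and as a byproduct yields the explicit values of all the integrals (the paper's recursion is, of course, just the functional equation of $\Gamma$ in disguise, so the two methods are related at bottom). What the paper's method buys is self-containedness at the level of one-variable calculus, with no appeal to special functions, and convergence is visible at each step of the recursion rather than imposed once through the Beta representation. One small imprecision on your side: $I_4$ is not the unique most restrictive case, since $I_2=G(1,m+n-2)$ and $I_6=G(2,m+n-1)$ impose the same condition $2m+n-6>0$; this does not affect the argument, as the hypotheses $n\ge 7$, $m\ge 0$ cover all three.
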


\begin{proof} Using polar coordinates, we obtain
\begin{equation}\label{I1 comparacion}
\begin{array}{ll}
I_1 = vol(S^{n-1}) \displaystyle \int_{0}^{\infty} \dfrac{r^{n-1}}{(1 + r^2)^{m+n-2}} dr,
\end{array}
\end{equation}
\begin{equation}\label{I2 comparacion}
\begin{array}{ll}
I_2  =   vol(S^{n-1}) \displaystyle \int_{0}^{\infty} \dfrac{r^{n+1}}{(1 + r^2)^{m+n-2}} dr,
\end{array}
\end{equation}
\begin{equation}\label{I3 comparacion}
\begin{array}{ll}
I_3 & =  vol(S^{n-1})  \displaystyle \int_{0}^{\infty} \dfrac{r^{n+3}}{(1 + r^2)^{m+n}} dr,
\end{array}
\end{equation}
\begin{equation}\label{I4 comparacion}
\begin{array}{ll}
I_4  =   vol(S^{n-1}) \displaystyle \int_{0}^{\infty} \dfrac{r^{n+5}}{(1 + r^2)^{m+n}} dr,
\end{array}
\end{equation}
\begin{equation}\label{I5 comparacion}
\begin{array}{ll}
I_5  =   vol(S^{n-1}) \displaystyle \int_{0}^{\infty} \dfrac{r^{n+1}}{(1 + r^2)^{m+n-1}} dr,
\end{array}
\end{equation}
\begin{equation}\label{I6 comparacion}
\begin{array}{ll}
I_6  =   vol(S^{n-1}) \displaystyle \int_{0}^{\infty} \dfrac{r^{n+3}}{(1 + r^2)^{m+n-1}} dr
\end{array}
\end{equation}
\begin{equation}\label{I7 comparacion}
\begin{array}{ll}
I_7  =   vol(S^{n-1}) \displaystyle \int_{0}^{\infty} \dfrac{r^{n+1}}{(1 + r^2)^{m+n}} dr,
\end{array}
\end{equation}
\begin{equation}\label{I8 comparacion}
\begin{array}{ll}
I_8  =   vol(S^{n-1}) \displaystyle \int_{0}^{\infty} \dfrac{r^{n-1}}{(1 + r^2)^{m+n-1}} dr,
\end{array}
\end{equation} and 
\begin{equation}\label{V comparacion}
\begin{array}{ll}
V  =   \dfrac{vol(S^{n-1})}{\tilde{c}(m, n)^{\frac{n}{2}}} \displaystyle  \int_{0}^{\infty} \dfrac{r^{n-1}}{(1 + r^2)^{m+n}} dr.
\end{array}
\end{equation}

Integrating by parts, we obtain for every $k> 1$, $l>1$, and $k>l$
\begin{equation}\label{comparacion}
\begin{array}{ll}
\displaystyle \int_{0}^{\infty} \dfrac{r^{l+1}}{(1 + r^2)^{k}} dr = \frac{l}{2(k-1)} \int_{0}^{\infty} \dfrac{r^{l-1}}{(1 + r^2)^{k-1}} dr,
\end{array}
\end{equation} which implies that $I_7 = \frac{n}{2(m+n-1)}I_8$, $I_3 = \frac{n+2}{2(m+n-1)}I_5$ and $I_4 = \frac{n+4}{2(m+n-1)}I_6$. To compare $I_5$ with $I_7$, we write
\begin{equation}\label{I9 Case comparacion}
\begin{array}{ll}
\displaystyle \int_{0}^{\infty} \dfrac{r^{n+1}}{(1 + r^2)^{m+n-1}} dr = \int_{0}^{\infty} \dfrac{r^{n+1}}{(1 + r^2)^{m+n}} dr + \int_{0}^{\infty} \dfrac{r^{n+3}}{(1 + r^2)^{m+n}} dr.
\end{array}
\end{equation}

Using equality \eqref{comparacion} in \eqref{I9 Case comparacion} yields 
\begin{equation}\label{I10 Case comparacion}
\begin{array}{ll}
\displaystyle \dfrac{2m+n-4}{2(m+n-1)}\int_{0}^{\infty} \dfrac{r^{n+1}}{(1 + r^2)^{m+n-1}} dr = \int_{0}^{\infty} \dfrac{r^{n+1}}{(1 + r^2)^{m+n}} dr.
\end{array}
\end{equation}

Hence, $I_5 = \frac{2(m+n-1)}{2m+n-4} I_7$ and $I_3 = \frac{n+2}{2m+n-4} I_7$. Similarly 
\begin{equation}\label{I2 Case comparacion orden 2}
\begin{array}{ll}
\displaystyle \int_{0}^{\infty} \dfrac{r^{n+3}}{(1 + r^2)^{m+n-1}} dr = \int_{0}^{\infty} \dfrac{r^{n+3}}{(1 + r^2)^{m+n}} dr + \int_{0}^{\infty} \dfrac{r^{n+5}}{(1 + r^2)^{m+n}} dr.
\end{array}
\end{equation}

Using equality \eqref{comparacion} in \eqref{I2 Case comparacion orden 2} yields 
\begin{equation}\label{I3 Case comparacion orden 2}
\begin{array}{ll}
\displaystyle \dfrac{2m+n-6}{2(m+n-1)}\int_{0}^{\infty} \dfrac{r^{n+3}}{(1 + r^2)^{m+n-1}} dr = \int_{0}^{\infty} \dfrac{r^{n+3}}{(1 + r^2)^{m+n}} dr.
\end{array}
\end{equation}

Then, \small $I_6 = \frac{2(m+n-1)}{2m+n-6} I_3 = \frac{2(m+n-1)(n+2)}{(2m+n-4)(2m+n-6)}I_7$ \normalsize and $I_4 = \frac{(n+2)(n+4)}{(2m+n-4)(2m+n-6)} I_7$.

To compare $I_7$ with $V$, we write 
\begin{equation}\label{I11 Case comparacion}
\begin{array}{ll}
\displaystyle \int_{0}^{\infty} \dfrac{r^{n-1}}{(1 + r^2)^{m+n-1}} dr = \int_{0}^{\infty} \dfrac{r^{n-1}}{(1 + r^2)^{m+n}} dr + \int_{0}^{\infty} \dfrac{r^{n+1}}{(1 + r^2)^{m+n}} dr.
\end{array}
\end{equation}

Using equality \eqref{comparacion} in equality above, we obtain
\[
\displaystyle \frac{2m+n-2}{n} \int_{0}^{\infty} \dfrac{r^{n+1}}{(1 + r^2)^{m+n}} dr = \int_{0}^{\infty} \dfrac{r^{n-1}}{(1 + r^2)^{m+n}} dr.
\]

Therefore,
\begin{equation}\label{I12 Case comparacion}
\begin{array}{ll}
  \dfrac{I_7}{V} = \dfrac{n c(m,n)^{\frac{n}{2}}}{2m+n-2}. 
\end{array}
\end{equation}

Now, we compare $I_1$ with $I_7$; for this purpose, observe that
\begin{equation}\label{I13 Case comparacion}
\begin{array}{ll}
\displaystyle \int_{0}^{\infty} \dfrac{r^{n-1}}{(1 + r^2)^{m+n-2}} dr = \int_{0}^{\infty} \dfrac{r^{n-1}}{(1 + r^2)^{m+n-1}} dr + \int_{0}^{\infty} \dfrac{r^{n+1}}{(1 + r^2)^{m+n-1}} dr.
\end{array}
\end{equation}

Hence, $I_1 = I_8 + I_5$. Therefore, $I_1 = \frac{4(m+n-1)(m+n-2)}{n(2m+n-4)} I_7$. It remains to compare $I_2 $ with $I_7 $. We have
\begin{equation}\label{I13 Case comparacion orden 2}
\begin{array}{ll}
\displaystyle \int_{0}^{\infty} \dfrac{r^{n+1}}{(1 + r^2)^{m+n-2}} dr = \int_{0}^{\infty} \dfrac{r^{n+1}}{(1 + r^2)^{m+n-1}} dr + \int_{0}^{\infty} \dfrac{r^{n+3}}{(1 + r^2)^{m+n-1}} dr.
\end{array}
\end{equation}

It follows from equalities \eqref{I13 Case comparacion orden 2} and \eqref{comparacion} that $I_2 = \frac{n}{2(m+n-2)}I_5 + I_6$. As a consequence, $I_2 = \frac{4(m+n-1)(m+n-2)}{(2m+n-4)(2m+n-6)}I_7$. \end{proof}

\section*{Acknowledgements}

I am grateful to my advisor, Fernando Cod\'a Marques, for guidance and support. I am also grateful to the Universidad del Valle, Cali, Colombia, for support during my Ph.D. studies and through project CI 71205; to Capes for a  scholarship provided during Ph.D. studies; and to the Mathematics Department of Princeton University for its hospitality during my visits in October 2014 and September-October 2015 at the beginning of this process. Additionally, I want to thank IMPA for its hospitality in January-February 2020, when I wrote the second version of this document.

\end{document}